\newtheorem{thm}{Theorem}
\newtheorem{lem}{Lemma}%
\newtheorem{prop}{Proposition}%
\theoremstyle{definition}
\theoremstyle{remark}
\newtheorem{remark}{Remark}[section] %
\theoremstyle{plain}
\numberwithin{equation}{section}
\def\CC{{\mathbb C}}
\def\HH{{\mathbb H}}
\def\NN{{\mathbb N}}
\def\QQ{{\mathbb Q}}
\def\RR{{\mathbb R}}
\def\TT{{\mathbb T}}
\def\ZZ{{\mathbb Z}}
\def\scrD{{\mathcal D}}
\def\scrM{{\mathcal M}}
\def\Re{\operatorname{Re}}
\def\Im{\operatorname{Im}}
\def\e{\mathrm{e}}
\def\i{\mathrm{i}}
\def\C{\operatorname{C{}}}
\def\G{\operatorname{G{}}}
\def\SL{\operatorname{SL}}
\def\sgn{\operatorname{sgn}}
\def\SLZ{\SL(2,\ZZ)}
\def\SLR{\SL(2,\RR)}
\def\USLR{\widetilde\SL(2,\RR)}
\def\Del{\Delta_1(4)}
\def\DelSLR{\Del\backslash\USLR}
\title{Short incomplete Gauss sums and rational points on metaplectic horocycles}
\author{Emek Demirci Akarsu}
\address{School of Mathematics, University of Bristol,
Bristol BS8 1TW, U.K.\newline
\rule[0ex]{0ex}{0ex} \hspace{8pt}{\tt e.demirciakarsu@bristol.ac.uk}}
\date{\today}
\thanks{E.D.A.\ is supported by a Turkish Ministry of Education doctoral training grant.}
\keywords{Gauss sums; metaplectic group; horocycles.}
\subjclass[2010]{11L05, 37D40}
\begin{document}

\begin{abstract}In the present paper we investigate the limiting behavior of short incomplete Gauss sums at random argument as the number of terms goes to infinity. We prove that the limit distribution is given by the distribution of theta sums and differs from the limit law for long Gauss sums studied by the author and Marklof. The key ingredient in the proof is an equidistribution theorem for rational points on horocycles in the metaplectic cover of $\SLR$.
\end{abstract}

\maketitle

\section{Introduction \label{secIntro}}
The sum we are concerned with is the incomplete Gauss sum
\begin{equation}\label{sgauss}
g_f(p, q, N)=\sum_{h\in\ZZ} f\bigg(\frac{h}{N}\bigg)\, e_q(p h^2),
\end{equation}
where $q\in\ZZ$, $p\in\ZZ_q^\times=\{p\in\ZZ_q :\,\gcd(p,q)=1\}$, $\ZZ_q=\ZZ/q\ZZ$ and $e_q(t)=\e^{\frac{2\pi \i t}{q}}$. The function $f:\RR\to\RR$ is assumed to be Riemann integrable with compact support. The classical example of an incomplete Gauss sum is when $f$ is the characteristic function of a subinterval of $[0,1]$, see for instance \cite{Lehmer76, Fiedler77, Oskolkov91, Evans03, Paris05, Paris08}. The exponential sum here is quadratic in $h$, see \cite{Montgomery95} for the more difficult case of higher powers.

The incomplete Gauss sum is a special case of a theta sum
\begin{equation}\label{thetasum} S_f(x,N)=\sum_{h\in\ZZ}f\bigg( \frac{h}{N}\bigg)\,e(x h^2),\end{equation}
evaluated at rational point $x=\frac{p}{q}$, where $e(t)=\e^{2 \pi \i t}$. If we take $f$ to be the characteristic function of $(0,1]$, we have the classical case
\begin{equation}S(x,N)=\sum_{h=1}^{N} e(x h^2).
\end{equation}
The limiting distribution of theta sum $|S(x,N)|$ has been studied by Jurkat and van Horne \cite{Jurkat81,Jurkat82,Jurkat83}, for $x$ uniformly distributed on $\TT=\RR\backslash\ZZ$, and as $N\to\infty$ the full distribution of $S_f(x,N)$ in the complex plane has been investigated by Marklof \cite{Marklof99}. Cellarosi \cite{Cellarosi11} has recently shown that $S(x,\left[tN\right])$, $t\in[0,1]$ converges to a random process. 

The limit law of the value distribution of the sum $g_f(p,q,N)$ when $N$ is of the same order of magnitude as $q$ has been studied by the author and Marklof \cite{DemirciMarklof}. The limit distribution in this case is given by the distribution of a family of periodic functions. Our aim in this paper is to investigate the limiting behavior of short incomplete Gauss sums for $N=o(q)$.

Our main result is the following theorem.
\begin{thm}\label{thm1} Fix $a\in\NN$ and a subset $\scrD \subset \TT$ with boundary of measure zero. For each $q\in\NN$ choose $p\in\ZZ_q^\times\cap q \scrD$ at random with uniform probability. Given $f:\RR\to\RR$ Riemann integrable and compactly supported, there exists a probability measure $\nu_f$ on $\CC$ such that as $q\to\infty$ along any subsequence with $q=aq'$, $q'$ prime, $\frac{N}{q}\to 0$ and $\frac{N^{4/3}}{q}\to\infty$, for any bounded continuous function $F : \CC\to\RR$, we have 
$$\frac{1}{|\ZZ_q^\times\cap q \scrD|}\sum_{p\in\ZZ_q^\times\cap q\scrD} F\bigg(\frac{{g_f}(p, q, N)}{\sqrt{N}}\bigg) \rightarrow \int_{\CC} F(z)\,\nu_f(d z).$$
\end{thm}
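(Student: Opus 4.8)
The plan is to reduce the value distribution of the normalized Gauss sum $g_f(p,q,N)/\sqrt N$ to a dynamical statement on the metaplectic group, and then invoke equidistribution of rational points on metaplectic horocycles (the ``key ingredient'' announced in the abstract). First I would recall the standard theta-sum machinery: there is a function $\Theta_f$ on $\Gamma\backslash\widetilde{\SL}(2,\RR)$ (for a suitable congruence subgroup $\Gamma$ associated with the theta multiplier, namely $\Delta_1(4)$) such that for $x\in\RR$ and $N\geq 1$,
\[
\frac{1}{\sqrt N}\,S_f(x,N)=\Theta_f\!\left(\begin{pmatrix}1&x\\0&1\end{pmatrix}\begin{pmatrix}N^{-1}&0\\0&N\end{pmatrix};\,\varphi\right)
\]
up to the metaplectic phase factor, where $\Theta_f$ is continuous and bounded once $f$ is (say) $C^\infty_c$; the Riemann-integrable case is then handled by approximating $f$ from above and below by smooth compactly supported functions and a standard sandwich argument, so it suffices to prove the theorem for smooth $f$. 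Writing $x=p/q$, the point in question is $\Gamma$-orbit of $n(p/q)\,a(N)$ where $n(\cdot)$ is the unipotent and $a(\cdot)$ the diagonal element with the metaplectic lift chosen compatibly; as $p$ ranges over $\ZZ_q^\times\cap q\scrD$, these are exactly the rational points of denominator $q$ lying above the horocycle, restricted to the arc $\scrD$.

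Next I would quote the equidistribution theorem for rational points on metaplectic horocycles (stated elsewhere in the paper) in the precise form needed: for $a$ fixed and $q=aq'$ with $q'$ prime, $q\to\infty$, the normalized counting measure on $\{\Gamma\,n(p/q)\,a(N):\,p\in\ZZ_q^\times\cap q\scrD\}$ converges weakly to a limit measure $\mu$ on $\Gamma\backslash\widetilde{\SL}(2,\RR)$, provided the horocycle ``height'' parameter goes to infinity in the correct range. The twist here is that $N$ is \emph{not} fixed; it grows with $q$ but slower ($N=o(q)$), so the relevant shrinking horocycle has length of order $N/q\to 0$, while the rational points have spacing of order $1/q$; the condition $N^{4/3}/q\to\infty$, i.e.\ $q=o(N^{4/3})$, is exactly what guarantees there are enough rational points ($\sim q/N$ of them in an arc of length $N/q$ after rescaling, and one needs this to beat the error term coming from the relevant $L$-function / Weil or Salié estimate for the Kloosterman-type sums that appear when one expands the counting measure against a test function). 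The limit measure $\mu$ is the one whose pushforward under $\Theta_f$ defines $\nu_f$; concretely $\nu_f$ is the distribution of $\Theta_f$ with respect to $\mu$, and the fact that $\mu$ is the \emph{same} measure that governs the distribution of theta sums $S_f(x,N)$ at random real $x$ (i.e.\ Haar measure on $\Gamma\backslash\widetilde{\SL}(2,\RR)$, by Marklof's equidistribution of long horocycles) is exactly the assertion in the abstract that the limit law coincides with that of theta sums and differs from the long-Gauss-sum law of Demirci--Marklof.

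To finish, given a bounded continuous $F:\CC\to\RR$, the composition $F\circ\Theta_f$ is bounded and continuous on $\Gamma\backslash\widetilde{\SL}(2,\RR)$ (using continuity of $\Theta_f$ for smooth $f$), so weak convergence of the counting measures against this test function gives
\[
\frac{1}{|\ZZ_q^\times\cap q\scrD|}\sum_{p\in\ZZ_q^\times\cap q\scrD}F\!\left(\frac{g_f(p,q,N)}{\sqrt N}\right)
\longrightarrow\int_{\Gamma\backslash\widetilde{\SL}(2,\RR)}F(\Theta_f(g))\,d\mu(g)=\int_\CC F(z)\,\nu_f(dz),
\]
after accounting for the metaplectic phase (which, because it is a unit-modulus factor depending on the group element, is absorbed into $\Theta_f$ and hence into $\mu$). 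The boundary-measure-zero hypothesis on $\scrD$ is what lets us pass from closed/open arcs to $\scrD$ itself in the equidistribution statement. \textbf{The main obstacle} I expect is the joint limit: the equidistribution theorem for rational points must be proved in a form uniform enough to allow the horocycle parameter $N$ to vary with $q$ in the window $q^{3/4}\ll N\ll q$, which is where the Weil/Salié bound for the relevant twisted Kloosterman (or Gauss) sums over $\ZZ/q'\ZZ$ enters and dictates the exponent $4/3$; a secondary technical point is checking that the cuspidal/non-compactness of $\Gamma\backslash\widetilde{\SL}(2,\RR)$ does not cause escape of mass --- one needs $\Theta_f$ and $F\circ\Theta_f$ to be genuinely bounded (true for compactly supported $f$, since $|\Theta_f|$ is controlled by a Siegel-type majorant that stays bounded along the relevant points), so no mass is lost to the cusp and the weak limit $\nu_f$ is a genuine probability measure.
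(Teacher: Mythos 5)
Your reduction of the smooth-weight case is essentially the paper's route: write $g_f(p,q,N)/\sqrt N=\Theta_f(\frac pq+\i N^{-2},0)$, feed the test function $\chi_\scrD(\xi)F(\Theta_f(z,\phi))$ into the equidistribution theorem for rational points on metaplectic horocycles (whose error terms $O(q^{3/2}y)$ and the requirement $q^2y\to\infty$, with $y=N^{-2}$, are exactly what produce the window $N=o(q)$, $N^{4/3}/q\to\infty$), and use $|\partial\scrD|=0$ to approximate $\chi_\scrD$ by continuous functions. Up to glossing over the case distinction $q\bmod 4$ and the conditioning on the metaplectic phase $\phi_{p,q}$, that part is fine.

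The genuine gap is your treatment of Riemann integrable $f$: ``approximating $f$ from above and below by smooth compactly supported functions and a standard sandwich argument'' does not work here. The quantity $F\big(g_f(p,q,N)/\sqrt N\big)$ is not monotone in $f$, because $f$ sits inside the oscillatory sum $\sum_h f(h/N)e_q(ph^2)$; and pointwise the difference $g_{f-\widetilde f}(p,q,N)/\sqrt N$ is only bounded by $\|f-\widetilde f\|_1\sqrt N$, which blows up, so no deterministic sandwich can close the argument. What is actually needed (and what the paper supplies) is a mean-square estimate over $p$: for $q=aq'$, $q'$ prime, and $N=o(q)$ one shows $\frac{1}{\varphi(q)}\sum_{p\in\ZZ_q^\times}|g_f(p,q,N)|^2\ll_a N\|f\|_2^2$, by reducing via the Chinese remainder theorem to counting solutions of $h^2\equiv h'^2\bmod q'$ (forcing $h\equiv\pm h'\bmod q'$, with the shifted terms $k\neq 0$ killed because the supports of $f(\cdot)$ and $f(\cdot+kq'/N)$ are disjoint once $N/q\to0$). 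This $L^2$ bound, applied to $f-\widetilde f$, gives via Chebyshev that $|g_f-g_{\widetilde f}|/\sqrt N$ is small outside a set of $p$ of small density (a tightness statement), and then the estimate $|F(w)-F(z)|\le C\min\{1,|w-z|\}$ for $F\in\C_0^\infty$ together with a Cauchy-sequence argument transfers the smooth-weight limit to Riemann integrable $f$ and identifies a unique limit law. Without this arithmetic input the passage from smooth to Riemann integrable weights — which is part of the statement you are proving — is unjustified. (A smaller inaccuracy: $\Theta_f$ is not bounded on the modular surface for compactly supported $f$ — the $h=0$ term grows like $y^{1/4}$ in the cusp — what saves you is only that $F$ itself is bounded, plus non-escape of mass for the limiting horocycle measure.)
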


The limiting probability measure $\nu_f$ here is the same as for the theta sum $S_f(x,N)$ in \cite{Jurkat81,Jurkat82, Jurkat83, Marklof99} and different from the range $N\asymp q$, discussed in \cite{DemirciMarklof}. In particular we have the tail $\int_{|z|>R}\nu_f(d z)\sim c_f\, R^{-4}$ as shown by Jurkat and van Horne \cite{Jurkat81, Jurkat82, Jurkat83}, and Marklof \cite{Marklof99}. We believe that the assumption $\frac{N^{4/3}}{q}\to\infty$ can be removed as long as $N\to\infty$, cf. Remark \ref{remark1} below and Figure \ref{figboundry}.

A more general version of Theorem \ref{thm1} is stated in Section \ref{maintheorem}.

The proof of the theorem for smooth functions comes from equidistribution of rational points on metaplectic horocyles (Section \ref{rational}) and the relation between theta series and short incomplete Gauss sums explained in Section \ref{maintheorem}. The proof is provided in Section \ref{smooth} for smooth weights. For Riemann integrable weights the proof uses mean-square estimates, see Section \ref{riemann}.

To illustrate Theorem \ref{thm1}, we have numerically computed the value distribution of real and imaginary part of the short incomplete Gauss sum, see Figures \ref{fig6007}-\ref{figN=q}. The difference between the limit distribution of incomplete Gauss sum (the case $N\asymp q$) and short case (the case $N=o(q)$) is that in the former the limiting distribution has compact support \cite{DemirciMarklof}, cf. Figure \ref{figN=q}, while in the later the limit distribution has long-tailed distribution, cf. Figure \ref{fig6007} and Figure \ref{figboundry}. More details in Section \ref{numerics}.

\begin{figure}
\begin{center}
\includegraphics[width=0.49\textwidth]{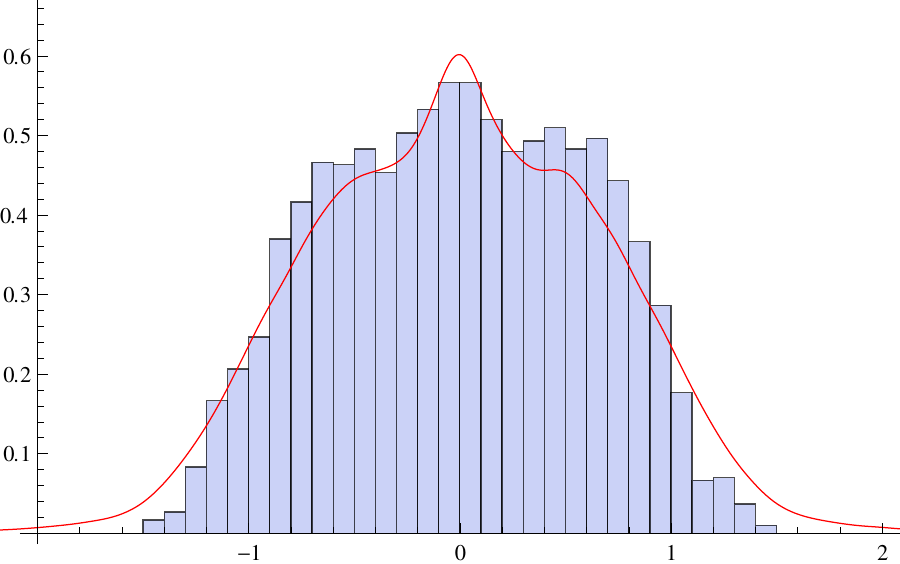}
\includegraphics[width=0.49\textwidth]{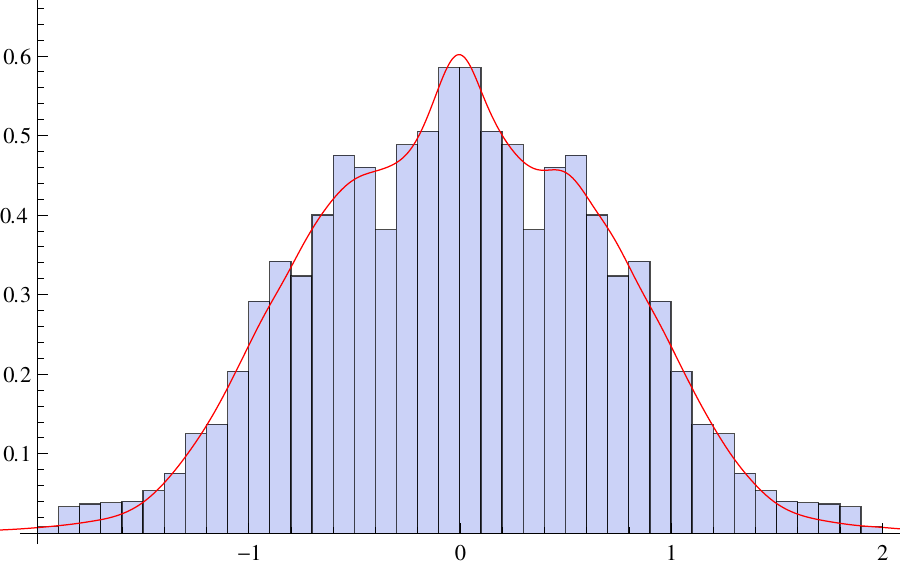}
\end{center}
\caption{The histogram on the left shows the value distribution of the real part of $\frac{g_f(p,q,N)}{\sqrt N}$ where $f$ is the characteristic function of the unit interval and $N=\left\lfloor \frac{1}{\sqrt{7}}\,q^{7/8}\right\rfloor$, $q=6007$ (prime) and $p$ uniformly distributed in $\ZZ_q^\times$, where $\varphi(q)=6006$. The histogram on the right shows the imaginary part of the corresponding quantities. The continuous curves represent a numerical approximation to the real part and imaginary part of the probability density of the random variable $S_f(x,N)$ with $x$ uniformly distributed in [0,1].}\label{fig6007}
\end{figure}
\begin{figure}
\begin{center}
\includegraphics[width=0.49\textwidth]{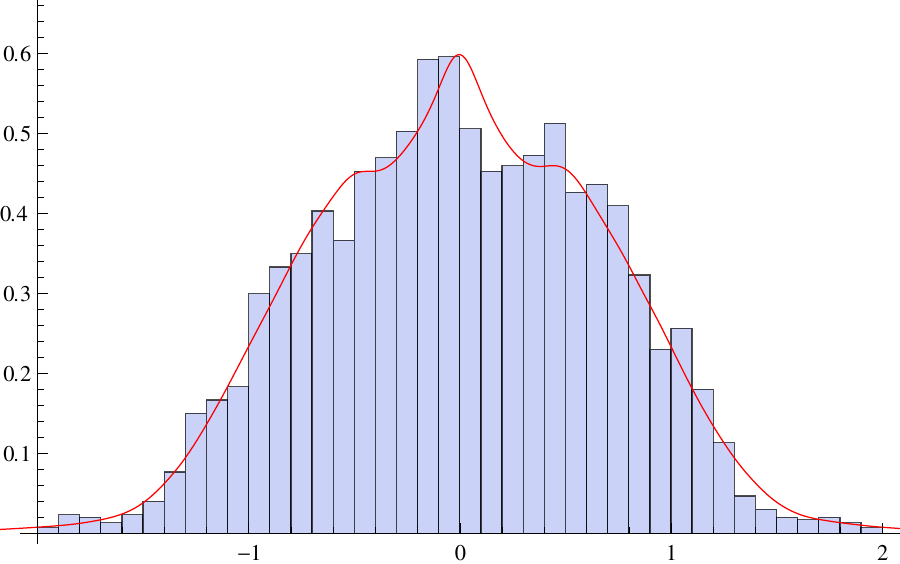}
\includegraphics[width=0.49\textwidth]{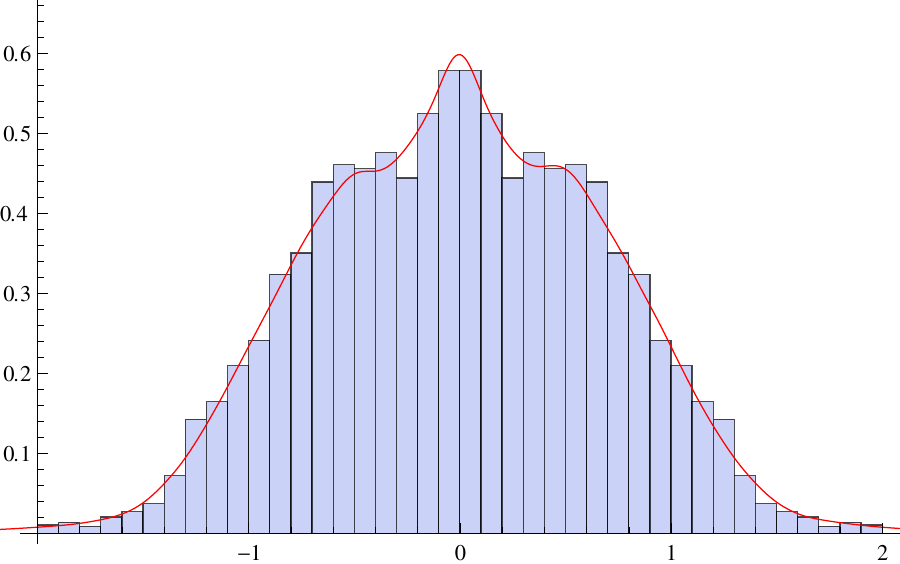}
\end{center}
\caption{The histograms and the curves represent the same as Figure \ref{fig6007} but now for $N=\left\lfloor \frac{1}{\sqrt{7}}\,q^{3/4}\right\rfloor$, which is outside the range of validity of our theorem.}\label{figboundry}
\end{figure}
\begin{figure}
\begin{center}
\includegraphics[width=0.49\textwidth]{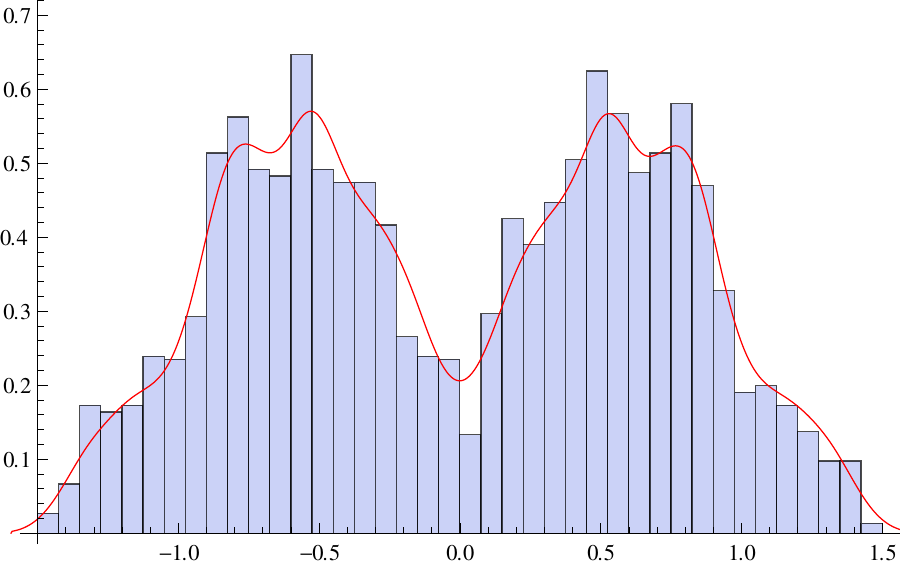}
\includegraphics[width=0.49\textwidth]{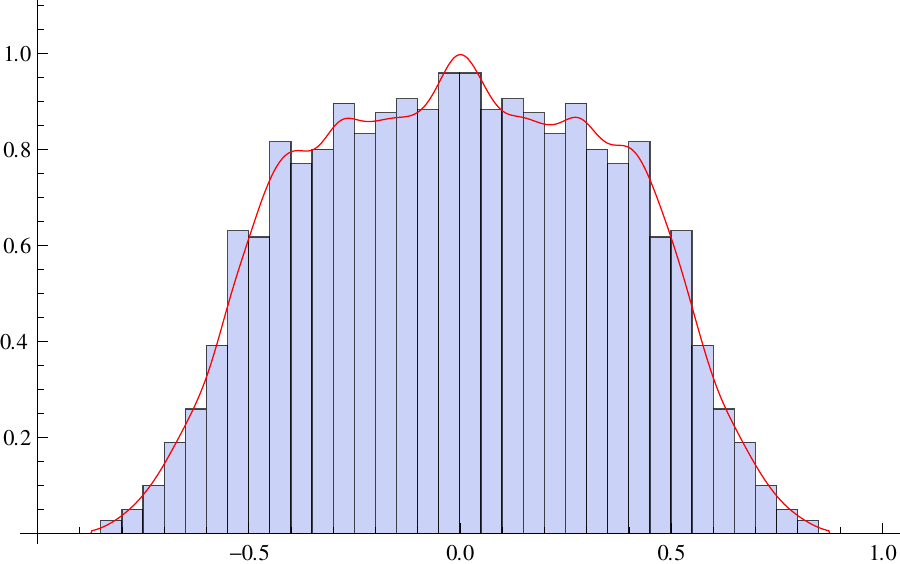}
\end{center}
\caption{The histograms show the value distributions of the real part and imaginary part of $\frac{g_f(p,q,N)}{\sqrt N}$ where $f$ is the characteristic function of the unit interval and $N=\left\lfloor \frac{1}{\sqrt{7}}\,q\right\rfloor$, we choose $q=6029$ (prime), where $\varphi(q)=6028$. The continuous curves represent a numerical approximation to the real part and imaginary part of the probability density of the random variable $7^{1/4}\,Y\,G_\varphi(x)$ with $x$ uniformly distributed in [-1/2,1/2]. The notations $Y$ and $G_\varphi$ are explained in Section \ref{numerics}.}\label{figN=q}
\end{figure}

As mentioned earlier, the key step in the proof of Theorem \ref{thm1} is the equidistribution of rational points on horocyles as the period of horocycles becomes large. The precise result is stated as Theorem \ref{mtplctcEq} in Section \ref{rational}. For the purpose of this introduction, we will for now restrict our attention to horocycles on the homogeneous space $\Gamma\backslash G$ with $ G:=\SLR $ and $\Gamma:=\SLZ$. The Iwasawa decomposition gives a convenient parametrization of $\SL(2,\RR)$:
For given $g\in \SL(2,\RR)$ there is a unique $x,y,\phi$ as below

\begin{equation}g=
\left(\begin{array}{cc}
1&x\\
0&1\\	
\end{array}\right)
\left(\begin{array}{cc}
y^{1/2}&0\\
0&y^{-1/2}\\	
\end{array}\right)
\left(\begin{array}{cc}
\cos(\phi)&\sin(\phi)\\
-\sin(\phi)&\cos(\phi)\\	
\end{array}\right)
\end{equation}
where $z=x+\i y\in \HH:=\{z\in\CC: \Im z>0\}$ and $\phi\in[0,2\pi)$.

The order of $\ZZ_q^\times$ is denoted by Euler's totient function $\varphi(q)$.

\begin{thm}\label{modgeqthm}Let $f\in C_0^{\infty}(\TT\times\Gamma\backslash G)$ (infinitely differentiable and of compact support). Then the following holds as $y\to 0$, $q\to \infty$, $q^2y\rightarrow\infty$ with $q$ prime:
 \begin{equation}\frac{1}{\varphi(q)}\sum_{p\in \ZZ_q^\times}f\bigg(\frac{p}{q},\frac{p}{q}+\i y,0 \bigg)=\int_{\TT\times\Gamma\backslash G} f(\xi,z,\phi)\,d\xi\,d\mu(z,\phi)+O(q^{3/2} y)+o\bigg(\frac{1}{\sqrt{q^2y}}\bigg)
.\end{equation}
where $d\mu=dx\,dy\,d\phi/{y^2}$.
\end{thm}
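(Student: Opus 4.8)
The plan is to reduce the sum over $p\in\ZZ_q^\times$ to a sum over all of $\ZZ_q$ by inclusion–exclusion, and then to recognize the resulting averages as values of $f$ along a full rational horocycle orbit, which can be analyzed by standard equidistribution of long closed horocycles in $\Gamma\backslash G$. Concretely, for a prime $q$ we write, using Möbius over the single prime $q$,
\begin{equation}
\frac{1}{\varphi(q)}\sum_{p\in\ZZ_q^\times}f\Bigl(\tfrac pq,\tfrac pq+\i y,0\Bigr)
=\frac{q}{q-1}\cdot\frac1q\sum_{p=0}^{q-1}f\Bigl(\tfrac pq,\tfrac pq+\i y,0\Bigr)
-\frac{1}{q-1}f\bigl(0,\i y,0\bigr).
\end{equation}
The last term is $O(1/q)$ and is harmless; the prefactor $q/(q-1)=1+O(1/q)$ likewise only perturbs the main term by $O(1/q)$. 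So the task is to estimate $\frac1q\sum_{p=0}^{q-1}f(p/q,p/q+\i y,0)$.

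Next I would view the points $(p/q, p/q+\i y, 0)$ as samples, at the $q$-th roots of a period, of the closed horocycle of period $1$ at height $y$ in $\Gamma\backslash G$, with the first argument $\xi=p/q$ coming along for the ride as a point of $\TT$. The key tool is that the closed horocycle $\{(x, x+\i y, 0): x\in[0,1)\}$ equidistributes in $\Gamma\backslash G$ with respect to $d\mu$ as $y\to0$, with an effective rate governed by the spectral gap (or, classically, by bounds for Kloosterman/Salié sums, or by the Fourier expansion of $f$ in $x$ combined with Sarnak's or Zagier's estimates). To move from the integral over the horocycle to the Riemann-type sum at the $q$ equally spaced points, I would expand $f(\xi,x+\i y,\phi)$ (evaluated at $\phi=0$) in a Fourier series in both the $\xi$ and the $x$ variables; the diagonal Fourier mode reproduces $\int_\TT\int f\,d\xi\,d\mu$ after applying horocycle equidistribution, and the off-diagonal terms are controlled by incomplete Gauss/Kloosterman-type sums of modulus $q$, which by Weil's bound contribute $O(q^{-1/2}\cdot(\text{something growing with }y^{-1}))$, ultimately packaged as $o\bigl((q^2y)^{-1/2}\bigr)$. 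The appearance of the two error terms $O(q^{3/2}y)$ and $o((q^2y)^{-1/2})$ strongly suggests this two-parameter trade-off: one error from how well the discrete sum over $p$ approximates the continuous horocycle integral (worsening as $q\to\infty$ for fixed horocycle, hence the positive power of $q$ times $y$), the other from how well the horocycle integral approximates the full phase-space average (improving as the horocycle gets longer, i.e.\ as $y\to0$, but only at rate $y^{1/2}$, scaled by the number of sample points $q$).

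The main obstacle, I expect, is making the passage from the horocycle integral to the $q$-point discretization effective with exactly these error terms — i.e.\ controlling the non-trivial Fourier modes in the $x$-variable after sampling at $p/q$. This is where the arithmetic of $q$ enters: summing $e(mp/q)$-type weights against the (exponentially small but rapidly oscillating in $x$) Fourier coefficients of $f$ and of the horocycle flow forces one to bound complete exponential sums to modulus $q$, and one must keep careful track of how many modes $m$ contribute before the Fourier coefficients of the smooth function $f$ (and the decay coming from $y$) render the tail negligible; the condition $q^2y\to\infty$ is presumably exactly what guarantees that the relevant range of modes is short enough for Weil's square-root cancellation to beat the trivial bound. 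I would carry this out by: (i) invoking smoothness to truncate the $x$-Fourier expansion at $|m|\ll y^{-1}(\log(1/y))^{?}$ or $|m|\ll y^{-1-\epsilon}$ with negligible error; (ii) on each surviving mode, expressing the $p$-sum as a Gauss-type sum modulo $q$ (this is where $\phi=0$ and the explicit Iwasawa form make the exponent quadratic in $p$) and applying the Weil bound $O(\sqrt q)$; (iii) summing over $m$ and comparing with $\sqrt{q^2y}$ to obtain the $o((q^2y)^{-1/2})$ term, while the ``rounding'' of the Riemann sum yields the $O(q^{3/2}y)$ term.
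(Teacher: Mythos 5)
Your overall framework (Fourier expansion in $\xi$ and $x$, Weil-type bounds, horocycle equidistribution) contains the right ingredients, but the way you assemble them has a genuine gap: you never decorrelate the two copies of $p/q$. At the sample points the first argument $\xi=p/q$ and the horocycle coordinate $x=p/q$ coincide, so after expanding $f(\xi,(x+\i y,0))=\sum_{m,n}a(m,n,y)\,e(m\xi+nx)$ the inner sum over $p$ is $\sum_{p}e((m+n)p/q)$ --- a linear character (Ramanujan) sum, not a Gauss or Kloosterman sum; nothing at $\phi=0$ makes the exponent quadratic in $p$, so the square-root cancellation you invoke in step (ii) is simply not available. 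Worse, every mode with $m+n\equiv 0\bmod q$ (in particular all diagonal modes $n=-m$ with $|m|$ small, together with their aliases $n=-m+kq$) survives with full weight, and the smoothness bound $|a(m,n,y)|\ll(1+|m|)^{-B}(1+|n|y)^{-A}$ gives no decay for these terms, since the hypothesis only forces $q^2y\to\infty$ and allows $qy\to 0$, in which case $|n|y=O(1)$ for the relevant $n$. To kill those terms you would need decay of the twisted horocycle integrals $\int_\TT f(\xi,(x+\i y,0))e(mx)\,dx$ as $y\to0$, uniformly in $m$ --- an effective-equidistribution/spectral input you have not supplied; and in the sparse regime $qy\to0$ the $q$ sample points sit at mutual hyperbolic distance $1/(qy)\to\infty$ along the horocycle at height $y$, so no Riemann-sum heuristic can close this gap.

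The paper's argument inserts exactly the missing step: for each $p$ it applies an explicit $\gamma\in\Gamma$ with bottom row $(q,-p)$ (Lemma \ref{fltlem}), sending $(\frac{p}{q}+\i y,0)$ to a point $(-\frac{\overline p}{q}+\i\frac{1}{q^2y},\phi_{p,q})$ on the closed horocycle at height $\frac{1}{q^2y}$. After this flip the $\TT$-coordinate is $p/q$ while the horocycle coordinate is $-\overline p/q$, so the Fourier modes produce genuine Kloosterman (and, in the metaplectic setting, Sali\'e/twisted Kloosterman) sums $\sum_p e((mp-n\overline p)/q)$, and Weil's bound yields the $O(q^{3/2}y)$ error (Lemma \ref{klstlem}, Proposition \ref{lemEqui}); what remains is the integral over the closed horocycle at height $\frac{1}{q^2y}\to 0$, and Sarnak/Marklof equidistribution of that horocycle produces the main term plus $o((q^2y)^{-1/2})$ (Proposition \ref{thmEqui}). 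Note this also reverses your attribution of the two error terms: $O(q^{3/2}y)$ comes from the Kloosterman-sum step, not from a Riemann-sum discretization at height $y$, and $o((q^2y)^{-1/2})$ comes from equidistribution of the horocycle at height $1/(q^2y)$, not from Weil's bound.
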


\begin{remark}\label{remark1}The error term $O(q^{3/2}y)$ can most likely be removed, e.g. by the methods employed in \cite{Marklof03}. Note also that for $q$ prime, $\frac{p}{q}+\i y$ with $y=\frac{1}{q}$ are Hecke points, which are well known to be uniformly distributed on $\Gamma\backslash\G$ \cite{Clozel, Eskin, Goldstein}.

\end{remark}

We extend the result of Theorem \ref {modgeqthm} to the distribution of rationals on metaplectic horocyles, compare Theorem \ref{mtplctcEq}. The proof uses the Weil bounds on Kloosterman and Sali\'e sums, and exploits Marklof's equidistribution theorem \cite{Marklof} for metaplectic horocycles.

\section {Preliminaries}\label{preliminaries}

The definitions and related properties below come from \cite{Marklof}.

We will define the action of $\SLR$ on the upper half-plane $\HH$ by fractional linear transformations,
 i.e.,
\begin{equation} \label{sltwo}
g: z \mapsto gz = \frac{az+b}{cz+d},\quad 
g=\left(\begin{array}{cc}
a & b \\ c & d
\end{array}\right)
\in \SL(2,\RR).
\end{equation}
The universal covering group of $\SL(2,\RR)$ is denoted by $\widetilde{\SL}(2,\RR)$. That geometrically is $\widetilde{\SL}(2,\RR)\simeq\HH\times\RR$ and can be identified as
\begin{equation}\label{betafunct}
\widetilde{\SL}(2,\RR)=\left\{[g,\beta_g]:g\in\SL(2,R),\text{$\beta_g$ a continuous function on $\HH$ s.t.    $\e^{\i\beta_g(z)}=\frac{cz+d}{|cz+d|}$}\right\}
\end{equation}
with the property that
\begin{equation}[g,\beta_g](z,\phi)=(gz,\phi+\beta_g(z)).
\end{equation} 
The product of two elements is given by
\begin{equation}
[g, \beta_g][g',\beta'_{g'}]=[gg',\beta''_{gg'}],\quad\beta''_{gg'}(z):=\beta_g(g'z)+\beta'_{g'}(z).
\end{equation} 
The inverse of $[g,\beta_g]$ is $[g^{-1},\beta'_{g^{-1}}]$ with the function $\beta'_{g{-1}}(z)=-\beta_g(g^{-1}z)$. We may identify $\widetilde{\SL}(2,\RR)$ with $\HH\times\RR$ via $[g,\beta_g]\longmapsto(z,\phi)=(g\i,\beta_g(\i))$.

To define the relevant discrete subgroups in $\widetilde{\SL}(2,\RR)$, consider the following congruence subgroups of $\SL(2,\ZZ)$
\begin{equation}\Gamma_0(4)=\left\{\left(\begin{array}{cc}
a&b\\
c&d	
\end{array}\right)\in\SL(2,\ZZ):c\equiv 0\bmod 4\right\}
\end{equation} and
\begin{equation}\Gamma_1(4)=\left\{\left(\begin{array}{cc}
a&b\\
c&d	
\end{array}\right)\in\Gamma_0(4):a\equiv d\equiv 1\bmod4\right\}.
\end{equation}
A discrete subgroup of $\widetilde{\SL}(2,\RR)$ is given by
\begin{equation}\Delta_1(4)=\left\{[g,\beta_g]:g\in\Gamma_1(4),\text{$\beta_g$ a continuous function on $\HH$ s.t. $\e^{\i\beta_g(z)}=\left(\frac{c}{d}\right)\epsilon_g(z)^{1/2}$}\right\}
\end{equation}
with $\left(\frac{c}{d}\right)\epsilon_g(z)^{1/2}=\left(\frac{c}{d}\right)\epsilon_g(e)^{\i(arg(cz+d))/2}$, where $\epsilon_a$ takes the value 1 or $\i$ when $a\equiv 1 \bmod 4$ or $a\equiv 3 \bmod 4$, respectively, and $\left(\frac{c}{d}\right)$ denotes the generalized quadratic residue symbol. For odd prime $p$ and an integer $a$ prime to $p$, the quadratic residue symbol $\left(\frac{a}{p}\right)$ has the value +1, if $a$ is a square modulo  $p$, and -1, if this is not the case. For an  integer $a$ and an odd integer $b$ the generalized quadratic residue symbol $\left(\frac{a}{b}\right)$ is characterized as Jacobi symbol defined as follows. Let $b$ be a positive odd integer with prime factorization $\prod^{r}_{i=1}b_i$. For $a\in\ZZ$, we define $\left(\frac{a}{1}\right)=1$ and 
\[ \left(\frac{a}{b}\right)=\left(\frac{a}{b_1}\right)...\left(\frac{a}{b_r}\right)\]
a product of Legendre symbol.

The generalized quadratic residue symbol (or Jacobi symbol) $\left(\frac{a}{b}\right)$ is characterized by the following properties (cf.~\cite{Shimura73}):
\begin{itemize}
\item $\left(\frac{a}{b}\right)=0$ if $\gcd(a,b)\neq 1$,
\item If $b$ is an odd prime, $\left(\frac{a}{b}\right)$ coincides
with the ordinary quadratic residue symbol,
\item If $b>0$, $\left(\frac{\cdot}{b}\right)$ defines a character
modulo $b$,
\item If $a\neq 0$, $\left(\frac{a}{\cdot}\right)$ defines a character
modulo a divisor of $4a$, whose conductor is the conductor of
$\QQ(\sqrt a)$ over $\QQ$, 
\item $\left(\frac{a}{-1}\right)=\sgn a$,
\item $\left(\frac{0}{\pm 1}\right)= 1$.
\end{itemize}
In particular $\left(\frac{a}{b}\right)^2$=1, if $\gcd(a,b)=1$.
The quotient
\begin{equation}\scrM=\DelSLR=\{\Del\;\widetilde g\;:\; \widetilde g\in \USLR \}
\end{equation}
is of finite measure with respect to the invariant measure $\frac{dx\,dy\,d\phi}{y^2}$ and $\mu(\scrM)=8\pi^2$.

\section{Rational points on horocycles in the metaplectic group}\label{rational} 

Our main result in this section, which is the extended version of Theorem \ref{modgeqthm}, is the next theorem. We need the following definitions appearing in the theorem.

The $\nu$-th Hermite function $h_\nu$ reads
\begin{equation}\label{hermite}
h_\nu(t)= (2^{\nu-1} \nu!)^{-1/2} H_\nu(2\pi^{1/2}t)\,\e^{-2\pi t^2},
\end{equation}
with the Hermite polynomial 
$$
H_\nu(t)=(-1)^\nu \e^{t^2} \frac{d^\nu}{dt^\nu} \e^{-t^2}.
$$
Let us consider the theta function
\begin{equation}
\theta_\nu(z)=y^{1/4}\sum_{n\in \ZZ} h_\nu(n y^{1/2})\, e(n^2 x),
\end{equation} 
with the transformation property (c.f. \cite{Shimura93})
\begin{equation} \label{trasprop}
\theta_\nu(gz)= j_g(z)^{2\nu+1} \theta_\nu(z),\quad
\text{for every $g\in\Gamma_0(4),$}
\end{equation}
where $j_g(z)=\epsilon_d^{-1} \left(\frac{c}{d}\right)
   \left(\frac{cz+d}{|cz+d|}\right)^{1/2}$. The function
$\theta_\nu(z,\phi)=\theta_\nu(z)\,\e^{-\i (2\nu+1)\phi/2}$
is invariant under $\Delta_1(4)$, that is we define $\theta_\nu(z,\phi)$ as a function on 
$\USLR$.

\begin{thm}\label{mtplctcEq} Fix $a\in\NN$. Let $f\in\C_0^\infty\big(\TT\times\scrM\big)$, $p,q\in\NN$, $\gcd(p,q)=1$, $y\in\RR_{>0}$ and $\widetilde\phi\in\pi(\ZZ+\frac12)$. Then the following holds as $y\to 0$, $q \to \infty$ along any subsequence with $q=aq'$, $q'$ prime :
\begin{enumerate}
\item[(i)] If $q^2y\to \infty$ and $q\equiv 0\bmod 4$, then for every $\widetilde\phi\in\{\pm\frac{\pi}{2},\pm\frac{3 \pi}{2}\}$
\begin{equation}\begin{split}
\frac{1}{\varphi(q)} \sum_{\substack{p\in\ZZ_q^\times\\ \phi_{p,q}=\widetilde\phi}} f\bigg(\frac{p}{q},\bigg(\frac{p}{q}+\i y,0\bigg)\bigg)
&=\frac{1}{32\pi^2}
\int_{\TT\times{\scrM}} f(\xi,z,\widetilde\phi)\;d\xi\;d\mu(z,\widetilde\phi)\\
&+
\frac{1}{32 \pi^2}
\bigg[\sum_{\nu=0}^\infty h_{2\nu}(0) \int_{\TT\times{\scrM}} f(\xi,z,\widetilde\phi) 
\Re\{ \theta_{2\nu}(z,\widetilde\phi)\}\,
d\xi\; d\mu(z,\widetilde\phi)\bigg]\; \bigg(\frac{1}{q^2y}\bigg)^{1/4} \\
&+O(q^{3/2}y)+o\bigg(\frac{1}{\sqrt{q^2y}}\bigg).
\end{split}\end{equation}

\item[(ii)] If $q^2y\to \infty$ and $q\equiv 1\bmod 2$, then for every $\widetilde\phi\in\{\pm\frac{\pi}{2}\}$
\begin{equation}\begin{split}
\frac{1}{\varphi(q)} \sum_{\substack{p\in\ZZ_q^\times\\ \phi_{p,q}=\widetilde\phi}} f\bigg(\frac{p}{q},\bigg(\frac{p}{q}+\i y,0\bigg)\bigg)
&=\frac{1}{16\pi^2}
\int_{\TT\times{\scrM}} f(\xi,z,\widetilde\phi)\;d\xi\;d\mu(z,\widetilde\phi)\\
&+
\frac{1}{16 \pi^2}
\bigg[\sum_{\nu=0}^\infty h_{2\nu}(0) \int_{\TT\times{\scrM}} f(\xi,z,\widetilde\phi) 
\Re\{ \theta_{2\nu}(z,\phi)\}\,
d\xi\; d\mu(z,\widetilde\phi)\bigg]\; \bigg(\frac{1}{4q^2y}\bigg)^{1/4}\\
&+O(q^{3/2}y)+o\bigg(\frac{1}{\sqrt{q^2y}}\bigg).
\end{split}
\end{equation}
\item[(iii)] If $q^2y\to \infty$ and $q\equiv 2\bmod 4$, then for every $\widetilde\phi\in\{\pm\frac{\pi}{2}\}$
\begin{equation}\begin{split}
\frac{1}{\varphi(q)} \sum_{\substack{p\in\ZZ_q^\times\\ \phi_{p,q}=\widetilde\phi}} f\bigg(\frac{p}{q},\bigg(\frac{p}{q}+\i y,0\bigg)\bigg)
&=\frac{1}{16\pi^2}
\int_{\TT\times{\scrM}} f(\xi,z,\widetilde\phi)\;d\xi\;d\mu(z,\widetilde\phi)\\
&+
\frac{1}{16 \pi^2}
\bigg[\sum_{\nu=0}^\infty h_{2\nu}(0) \int_{\TT\times{\scrM}} f(\xi,z,\widetilde\phi) 
\Re\{ \theta_{2\nu}(z,\widetilde\phi)\}\,
d\xi\; d\mu(z,\widetilde\phi)\bigg]\; \bigg(\frac{1}{q^2y}\bigg)^{1/4}\\
&+O(q^{3/2}y)+o\bigg(\frac{1}{\sqrt{q^2y}}\bigg).
\end{split}
\end{equation}
\end{enumerate}
\end{thm}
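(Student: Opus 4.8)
The plan is to reduce the metaplectic horocycle sum to a Weyl-type sum over $p \in \ZZ_q^\times$ that can be evaluated by opening up the theta functions $\theta_\nu$ and invoking spectral expansion together with classical exponential-sum bounds. First I would use the observation that the horocycle lift point $[(\frac{p}{q}+\i y, 0)]$ in $\scrM = \DelSLR$ has a canonical representative obtained by applying a matrix in $\Gamma_0(4)$ (or a suitable coset representative, incorporating the parameter $a$ and the congruence class of $q$ mod $4$) that moves $\frac{p}{q}$ to a point high in the cusp; the metaplectic cocycle $j_g$ then produces a phase which, after the substitution $n \mapsto$ relevant linear form, is exactly a Gauss sum $e_q(\ast \cdot h^2)$ type factor — this is where the condition $\phi_{p,q} = \widetilde\phi \in \pi(\ZZ+\tfrac12)$ comes from and why the answer splits into cases (i)--(iii) according to $q \bmod 4$. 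The key input is Marklof's equidistribution theorem for metaplectic horocycles cited in the excerpt: testing $f$ against the full horocycle $\{[(\frac{p}{q}+\i y, 0)] : p \in [0,q)\}$ gives the main term $\frac{1}{\mu(\scrM)}\int f$, and the discrepancy of the \emph{rational} sub-sample from the full horocycle is what we must control.

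Second, I would expand $f(\xi, z, \phi)$ in a Fourier series in $\xi$ and in the theta-function basis in the $(z,\phi)$ variable. The relevant fact is that the functions $\theta_{2\nu}(z,\phi)$ and their conjugates, together with the constant and the continuous spectrum, span $L^2(\scrM)$ in the appropriate weight; the secondary term in the statement, carrying the factor $(q^2 y)^{-1/4}$ and the coefficients $h_{2\nu}(0) = H_{2\nu}(0)(2^{2\nu-1}(2\nu)!)^{-1/2}$, arises precisely from pairing the rational-point sum against $\Re\{\theta_{2\nu}\}$. Concretely, when we plug a single term $\e(m\xi)\,\theta_{2\nu}(z,\phi)$ into the sum $\frac1{\varphi(q)}\sum_p$, the $\xi$-part produces a Ramanujan-type sum in $p$, and the $\theta_{2\nu}$-part, after applying the transformation property \eqref{trasprop} to descend $\frac{p}{q}+\i y$ into the fundamental domain, becomes $\sum_{n} h_{2\nu}(ny^{1/2})\, e_q(\bar p' n^2)$ up to an explicit automorphy factor — i.e. an incomplete Gauss sum. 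The leading $n = 0$ term contributes $h_{2\nu}(0)$ times the volume of the relevant region and accounts for the $(q^2y)^{-1/4}$ correction (after accounting for the $y^{1/4}$ in the definition of $\theta_\nu$ and the averaging); all $n \neq 0$ terms must be shown to be negligible.

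Third, the error analysis: the $n\neq 0$ contributions and the contribution of the continuous spectrum and of the odd-index theta functions must be bounded. Summing $e_q(\bar p' n^2)$ over $p \in \ZZ_q^\times$ gives a Salié sum (since the summand is a quadratic exponential in an inverse), and over $p$ more generally a Kloosterman-type sum; the Weil bound $O(q^{1/2+\epsilon})$ on these, together with $q = aq'$ with $q'$ prime to make the multiplicativity and the Weil bound clean, yields a saving of $q^{-1/2}$ relative to the trivial bound $\varphi(q)$. Combined with the range of summation $|n| \lesssim (y^{1/2})^{-1} \cdot (\log)$ forced by the Gaussian decay of $h_{2\nu}$, this produces the $o((q^2 y)^{-1/2})$ error. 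The $O(q^{3/2} y)$ term comes from the error in the approximation of the smooth cutoff $f$ by its truncated theta-expansion, or equivalently from Marklof's effective equidistribution rate for the full horocycle at height $y$ — hence Remark \ref{remark1}'s comment that it can plausibly be improved. The main obstacle I anticipate is the bookkeeping of the metaplectic cocycle: tracking the power of $j_g$, the quadratic residue symbols $\left(\frac{c}{d}\right)$, and the $\epsilon_d$ factors through the coset decomposition so that the phase in the resulting Gauss sum is exactly right, and verifying that the constraint $\phi_{p,q} = \widetilde\phi$ correctly selects which values of $\widetilde\phi$ are admissible in each congruence case — this is delicate because a sign or eighth-root-of-unity error there would corrupt both the normalizing constants $\frac1{32\pi^2}$ vs.\ $\frac1{16\pi^2}$ and the argument of $\left(\frac{1}{4q^2y}\right)^{1/4}$ versus $\left(\frac{1}{q^2y}\right)^{1/4}$ in cases (i)--(iii).
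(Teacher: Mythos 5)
Your outline contains several of the right ingredients (the inversion of $\frac{p}{q}+\i y$ into the cusp with careful tracking of the metaplectic phase, which is indeed what produces $\phi_{p,q}$ and the case split mod $4$; Weil-type bounds for Kloosterman and Sali\'e sums; Marklof's equidistribution theorem), but the central step of your plan has a genuine gap. You propose to expand $f$ on $\TT\times\scrM$ into ``the constant, the theta functions $\theta_{2\nu}$ and their conjugates, and the continuous spectrum'' and then evaluate the rational-point sum term by term. First, these functions do not span the relevant space: half-integral weight Maass cusp forms on $\DelSLR$ are omitted, and $f$ is not of a single weight in $\widetilde\phi$. Second, even granting a complete spectral expansion, you would then need asymptotics for sums of each cusp-form and Eisenstein component over the points $\frac{p}{q}+\i y$, $p\in\ZZ_q^\times$ with $\phi_{p,q}=\widetilde\phi$ fixed, which is essentially the whole difficulty and is nowhere addressed. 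The paper avoids this entirely by a two-step argument: Proposition \ref{lemEqui} (via Lemma \ref{fltlem}, a Fourier expansion of the test function in $(\xi,x)$ only, and Lemma \ref{klstlem}) replaces the constrained rational sample by the \emph{full} closed horocycle at the reciprocal height $\frac{1}{q^2y}$ (resp. $\frac{1}{4q^2y}$), up to $O(q^{3/2}y)$; then Proposition \ref{thmEqui} applies Marklof's Theorem 5.1 for long closed metaplectic horocycles, which already packages all of the spectral analysis and delivers both the main term and the $h_{2\nu}(0)\,(q^2y)^{-1/4}$ correction plus $o((q^2y)^{-1/2})$. Note also that unfolding the constraint $\phi_{p,q}=\widetilde\phi$ in case (i) requires the Weil bound for Kloosterman sums twisted by the theta multiplier $\epsilon_p(\tfrac{q}{p})$ (Chinen, Duke--Friedlander--Iwaniec), not only Sali\'e sums.

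Your error bookkeeping is also off in two places. After descending into the cusp the theta/Fourier expansion runs over frequencies $|n|\lesssim (q^2y)^{1/2}$, not $|n|\lesssim y^{-1/2}$; with your stated range the Weil saving of $q^{1/2}$ against $\varphi(q)$ gives a bound of order $(qy)^{-1/2}$, which diverges in the regime of Theorem \ref{thm1}, so the claimed $o((q^2y)^{-1/2})$ does not follow as written. And the $O(q^{3/2}y)$ term is not ``the error in approximating $f$ by a truncated theta expansion'' nor ``Marklof's effective rate for the full horocycle at height $y$'': in the paper it arises from the degenerate frequencies (those with large $\gcd$ with $q$, e.g. $m\equiv 0\bmod q$ or $q'\mid n$) in the Kloosterman/Sali\'e estimates controlling the \emph{discrete} sampling in Proposition \ref{lemEqui}; this is precisely why Remark \ref{remark1} suggests it could be removed by the discrete-sampling methods of Marklof--Str\"ombergsson, and it is the source of the hypothesis $N^{4/3}/q\to\infty$. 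The continuous-horocycle input contributes only the $o((q^2y)^{-1/2})$ term, and the constants $\frac{1}{32\pi^2}$ versus $\frac{1}{16\pi^2}$ come from the factor $\frac14$ (four admissible values of $\widetilde\phi$) or $\frac12$ in Proposition \ref{lemEqui} multiplied by $\frac{1}{\mu(\scrM)}=\frac{1}{8\pi^2}$, rather than from the cocycle bookkeeping alone.
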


The proof of this theorem is presented later on in this section. We first state some auxiliary results.

Let us now define $\widetilde\gamma_0$ and $\widetilde n_-(x)$ as follows:
\begin{equation}
\widetilde\gamma_0 =\bigg[ \begin{pmatrix} 0 & -\frac12 \\ 2 & 0 \end{pmatrix}, \arg \bigg],
\end{equation}
\begin{equation}
\widetilde n_-(x) =\bigg[ \begin{pmatrix} 1 & x \\ 0 & 1 \end{pmatrix}, 0 \bigg].
\end{equation}

\begin{prop}\label{lemEqui} Fix $a\in\NN$. Let $f\in\C_0^\infty\big(\TT\times\scrM\big)$ bounded, $p,q\in\NN$, $\gcd(p,q)=1$, $y\in\RR_{>0}$ and $\widetilde\phi\in\pi(\ZZ+\frac12)$. Then the following holds as $y\to 0$, $q \to \infty$ along any subsequence with $q=aq'$, $q'$ prime :

\begin{enumerate}
\item[(i)] If $q^2y\to \infty$ and $q\equiv 0\bmod 4$, then for every $\widetilde\phi\in\{\pm\frac{\pi}{2},\pm\frac{3 \pi}{2}\}$
\begin{equation}\label{conv1}
\frac{1}{\varphi(q)} \sum_{\substack{p\in\ZZ_q^\times \\ \phi_{p,q}=\widetilde\phi}} f\bigg(\frac{p}{q},\bigg(\frac{p}{q}+\i y,0\bigg)\bigg)
= \frac{1}{4}\int_{\TT^2} f\bigg(\xi, \bigg(x+\i \frac{1}{q^2y}, \widetilde\phi \bigg)\bigg)\, d\xi\,dx+O(q^{3/2}y) .
\end{equation}
\item[(ii)] If $q^2y\to \infty$ and $q\equiv 1\bmod 2$, then $\widetilde\phi\in\{\pm\frac{\pi}{2}\}$
\begin{equation}\label{conv2}
\frac{1}{\varphi(q)} \sum_{\substack{p\in\ZZ_q^\times \\ \phi_{p,q}=\widetilde\phi}} f\bigg(\frac{p}{q},\bigg(\frac{p}{q}+\i y,0\bigg)\bigg)
=\frac{1}{2}\int_{\TT^2} f\bigg(\xi, \widetilde\gamma_0\bigg(x+\i \frac{1}{4q^2y}, \widetilde\phi\bigg)\bigg)\, d\xi\,dx+O(q^{3/2}y).
\end{equation}
\item[(iii)] 
 If $q^2y\to \infty$ and $q\equiv 2\bmod 4$, then $\widetilde\phi\in\{\pm\frac{\pi}{2}\}$
\begin{equation}\label{conv3}
\frac{1}{\varphi(q)} \sum_{\substack{p\in\ZZ_q^\times \\ \phi_{p,q}=\widetilde\phi}} f\bigg(\frac{p}{q},\bigg(\frac{p}{q}+\i y,0\bigg)\bigg)
= \frac{1}{2}\int_{\TT^2} f\bigg(\xi, \widetilde\gamma_0\bigg(x+\i \frac{1}{q^2y}, \widetilde\phi\bigg)\,\widetilde n_-\bigg(\frac12\bigg)\bigg)\, d\xi\,dx+O(q^{3/2}y).
\end{equation}
\end{enumerate}
\end{prop}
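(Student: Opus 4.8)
\textbf{Proof strategy for Proposition~\ref{lemEqui}.}
The plan is to rewrite the left-hand side as an average of a fixed smooth function over points of the form $\widetilde n_-(p/q)\,\widetilde a(y)$ sitting on a closed horocycle in $\scrM=\DelSLR$, and then invoke Marklof's equidistribution theorem for metaplectic horocycles together with an explicit analysis of the orbit combinatorics. First I would note that for $\gcd(p,q)=1$ one may choose integers $r,s$ with $ps-qr$ a prescribed value, and use a matrix identity of the shape $\left(\begin{smallmatrix} p & r \\ q & s\end{smallmatrix}\right) = \left(\begin{smallmatrix} 1 & p/q \\ 0 & 1\end{smallmatrix}\right)\left(\begin{smallmatrix} (qy)^{1/2} & * \\ 0 & (qy)^{-1/2}\end{smallmatrix}\right)\kappa$ (for a suitable $\kappa\in\SO(2)$), lifted to $\USLR$, to express the point $(p/q+\i y,0)$ as $\widetilde\gamma_{p,q}\,\widetilde n_-(\ast)\,\widetilde a(q^{-2}y^{-1})$ with $\widetilde\gamma_{p,q}\in\widetilde\SL(2,\ZZ)$ projecting into $\scrM$ via the appropriate coset representative. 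The metaplectic phase $\phi_{p,q}$ is then determined by the generalized quadratic residue symbol $\left(\frac{q}{p}\right)$ (or $\left(\frac{p}{q}\right)$, depending on parities), which is exactly why the statement splits into the three congruence cases $q\equiv 0,1,2 \bmod 4$ and why the constants $\tfrac14$ resp. $\tfrac12$ and the shifts $q^{-2}y^{-1}$ resp. $(4q^2y)^{-1}$ appear — the factor $\widetilde\gamma_0$ and $\widetilde n_-(\tfrac12)$ in (ii), (iii) absorb the discrepancy between the congruence subgroup $\Del$ and the full metaplectic $\SL(2,\ZZ)$-lattice at the relevant cusp.

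Next, I would reduce to an equidistribution statement on a single closed horocycle of period $\to\infty$. After the above substitution the sum becomes
$$
\frac{1}{\varphi(q)}\sum_{\substack{p\in\ZZ_q^\times\\ \phi_{p,q}=\widetilde\phi}} G\!\left(\tfrac{p}{q},\,\widetilde n_-(u_{p,q})\,\widetilde a(q^{-2}y^{-1})\right)
$$
for a smooth compactly supported $G$ on $\TT\times\scrM$, where $u_{p,q}$ runs over a subset of rationals with denominator $q$ cut out by the quadratic-residue condition defining $\widetilde\phi$. The quadratic-residue condition on $p$ is detected by $\tfrac12(1\pm\left(\tfrac{\cdot}{q}\right))$, so expanding it introduces, besides the trivial character, the real quadratic character mod $q$; the contribution of the non-trivial term is a character sum that I would bound by Weil/P\'olya–Vinogradov-type estimates (this is where the Kloosterman/Sali\'e input announced after Remark~\ref{remark1} enters). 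The main term, with the trivial character, is $\frac{1}{\varphi(q)}$ times a sum over all $p\in\ZZ_q^\times$ of $G$ evaluated along the horocycle at height $q^{-2}y^{-1}\to 0$; completing the sum from $\ZZ_q^\times$ to all residues mod $q$ costs another character-sum error, and then one recognizes a Riemann sum for $\int_\TT G(\cdot,\widetilde n_-(x)\widetilde a(q^{-2}y^{-1}))\,dx$, i.e.\ exactly the pushforward of Haar measure on the closed horocycle of length $\asymp q^2 y$. Applying Marklof's theorem \cite{Marklof} — or rather, since we only need convergence of the horocycle integral and not the full sum, the elementary fact that a long closed horocycle equidistributes — and keeping quantitative control, one arrives at the stated right-hand sides with total error $O(q^{3/2}y)$; the $q^{3/2}y$ comes from the worst of the several character-sum / incomplete-sum errors accumulated, each of size $O(q^{1/2})$ relative to a main term of order $\varphi(q)\asymp q$, rescaled by the smoothness and the height.

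The main obstacle, I expect, is bookkeeping the metaplectic cocycle correctly: one must track how the phase $\beta_g$ transforms under the coset identification $\scrM=\Del\backslash\USLR$, verify that the point $(p/q+\i y,0)$ with trivial $\widetilde\phi$-coordinate really lands in the fiber over $\widetilde\phi$ precisely when the quadratic residue symbol takes the prescribed value, and check that for $q\equiv 1,2\bmod 4$ the correct representative is $\widetilde\gamma_0$ (resp.\ $\widetilde\gamma_0\,\widetilde n_-(\tfrac12)$) rather than the identity — this is a finite but delicate computation with $\epsilon_d$, $\left(\frac{c}{d}\right)$ and the branch of $(cz+d)^{1/2}$. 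A secondary technical point is that $q=aq'$ with $q'$ prime (rather than $q$ itself prime) is needed so that the character sums mod $q$ factor through the prime part and the Weil bound applies cleanly; the fixed factor $a$ only changes the counting constants. Once the cocycle is pinned down, the rest is a standard "complete the character sum, bound the error by Weil, identify the Riemann sum" argument, and the three cases are handled in parallel with only the constants and the height-shift differing.
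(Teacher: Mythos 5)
Your opening move is the right one and matches the paper's: flip $(\frac pq+\i y,0)$ by an explicit element of $\Del$ (composed with $\widetilde\gamma_0$, resp.\ $\widetilde\gamma_0$ and $\widetilde n_-(\frac12)$) onto the closed horocycle at height $(q^2y)^{-1}$ or $(4q^2y)^{-1}$, with the metaplectic phase $\phi_{p,q}$ computed from $\epsilon$ and the quadratic residue symbol; this is exactly Lemma \ref{fltlem}, and your list of delicate points (cocycle bookkeeping, the cusp representatives for $q\equiv 1,2\bmod 4$, the role of $q=aq'$) is on target. The genuine gap is in how you produce the main term. You leave the new horocycle coordinate $u_{p,q}$ unidentified; in fact it is $-\overline p/q$ (resp.\ $-\overline{4p}/q$, $-\overline{2p}/(q/2)$), a modular inverse of $p$. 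Hence the quantity being averaged is a smooth function of the \emph{pair} $(p/q,\,\overline p/q)\in\TT^2$, and the right-hand side of \eqref{conv1}--\eqref{conv3} is the full double integral $\int_{\TT^2}d\xi\,dx$. Your plan handles the trivial-character part by ``recognizing a Riemann sum'' in the single horocycle variable $x$; that cannot work, because the points $(p/q,-\overline p/q)$ form no product grid, and their joint equidistribution in $\TT^2$ is precisely a Kloosterman-sum statement, not a Riemann-sum one. In the paper one expands $f_0(\xi,(x+\i\,\cdot\,,0))$ in a double Fourier series in $(\xi,x)$, so the inner sums are $\sum_{p\in\ZZ_q^\times}e\big((mp-n\overline p)/q\big)$, possibly twisted by $\epsilon_p\big(\tfrac{q}{p}\big)$ or $\big(\tfrac{p}{q}\big)$ from detecting $\phi_{p,q}=\widetilde\phi$, and Weil's bounds for Kloosterman, twisted Kloosterman and Sali\'e sums (Lemma \ref{klstlem}) are needed for \emph{every} $(m,n)\neq(0,0)$, not only, as you suggest, for the quadratic-symbol detection terms; P\'olya--Vinogradov plays no role. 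The main term is the single mode $(m,n)=(0,0)$, and the constants $\frac14$ resp.\ $\frac12$ count the possible values of $\phi_{p,q}$ (four when $q\equiv0\bmod4$, two otherwise), rather than any cusp-width discrepancy, while the error $O(q^{3/2}y)$ comes from summing the Weil bound $O(q^{1/2}\tau(q))$ over the $\asymp q^2y$ effective modes in $n$ allowed by the decay \eqref{fourierbound}, divided by $\varphi(q)$.

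A secondary misdirection: Marklof's equidistribution theorem (and indeed any equidistribution of long closed horocycles) is not used at this stage at all. The right-hand sides of \eqref{conv1}--\eqref{conv3} are still exact horocycle integrals at height $(q^2y)^{-1}$ or $(4q^2y)^{-1}$; the passage from these to the $\scrM$-average with the $(q^2y)^{-1/4}$ correction term is the separate Proposition \ref{thmEqui}, which is where \cite{Marklof} enters. If you replace your Riemann-sum step by the double Fourier expansion plus the complete Kloosterman/Sali\'e estimates (including the $k=2$ twist by $\epsilon_p^2$, which produces $K(m+\tfrac q4,n,q)$ in case (i)), your outline becomes essentially the paper's proof.
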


The following lemmas will be helpful in proving Proposition \ref{lemEqui}.

\begin{lem}\label{fltlem}
Let $p,q\in\NN$, $\gcd(p,q)=1$ and $y\in\RR_{>0}$.
\begin{enumerate}
\item[(i)] If $q\equiv 0\bmod 4$, then $(\frac{p}{q}+\i y,0)\in\Del (-\frac{\overline p}{q}+\i \frac{1}{q^2 y}, \phi_{p,q})$ with $\e^{\i\phi_{p,q}/2}=\epsilon_p(\frac{q}{p})\e^{-\i\pi/4}$. 
\item[(ii)] If $q\equiv 1\bmod 2$, then $(\frac{p}{q}+\i y,0)\in\Del \, \widetilde\gamma_0\, (-\frac{\overline{4p}}{q}+\i \frac{1}{4 q^2 y}, \phi_{p,q})$, where $\overline{4p}$ denotes the inverse mod $q$, and $\e^{\i\phi_{p,q}/2}=\epsilon_q^{-1} (\frac{p}{q})\e^{-\i\pi/4}$. 
\item[(iii)] If $q\equiv 2 \bmod 4$, then $(\frac{p}{q}+\i y,0)\in\Del \, \widetilde\gamma_0\, (-\frac{\overline{2p}}{q/2}+\i \frac{1}{q^2 y}, \phi_{p,q}) \widetilde n_-(\frac12)$, where $\overline{2p}$ denotes the inverse mod $q/2$, and $\e^{\i\phi_{p,q}/2}=\epsilon_{q/2}^{-1}(\frac{2p}{q/2})\e^{-\i\pi/4}$. 
\end{enumerate}
\end{lem}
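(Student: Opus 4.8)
\textbf{Proof plan for Lemma \ref{fltlem}.}

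The plan is to exhibit, in each of the three cases, an explicit element $\gamma=[g,\beta_g]\in\Del$ that moves the point $(\tfrac{p}{q}+\i y,0)$ to a point of the required shape, and then to read off both the new base point in $\HH$ (via the fractional linear action) and the new fibre coordinate $\phi_{p,q}$ (via the multiplier $\e^{\i\beta_g}$). I would start with case (ii), $q$ odd, since it is the cleanest. Here one wants $g\in\Gamma_1(4)$ of the form $g=\begin{pmatrix} * & * \\ c & d\end{pmatrix}$ with bottom row essentially $(q, \overline{4p})$ up to the normalisations forced by $\Gamma_1(4)$ (i.e. $c\equiv 0\bmod 4$, $a\equiv d\equiv 1\bmod 4$), composed with the fixed element $\widetilde\gamma_0=[\begin{pmatrix}0&-\tfrac12\\2&0\end{pmatrix},\arg]$ whose job is to supply the factor of $4$ and the $z\mapsto -1/(4z)$ inversion. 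A direct matrix computation of $\widetilde\gamma_0^{-1}\,\widetilde n_-(\tfrac{p}{q})\,\widetilde a(y)$ (where $\widetilde a(y)$ is the diagonal part) shows the $\HH$-component equals $-\tfrac{\overline{4p}}{q}+\i\tfrac{1}{4q^2y}$; then $g$ is chosen to absorb the residual integer translation so that everything lies in the correct $\Del$-orbit. The fibre coordinate is then $\e^{\i\phi_{p,q}/2}$, which one computes from the definition $\e^{\i\beta_g(z)}=(\tfrac{c}{d})\epsilon_d(z)^{1/2}$ together with the contribution $\arg$ from $\widetilde\gamma_0$; the reciprocity for the Jacobi symbol (from the list of properties in Section \ref{preliminaries}) converts $(\tfrac{q}{p})$-type symbols into $(\tfrac{p}{q})$-type symbols, producing the stated $\e^{\i\phi_{p,q}/2}=\epsilon_q^{-1}(\tfrac{p}{q})\e^{-\i\pi/4}$.

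Next I would treat case (i), $q\equiv 0\bmod 4$. Now no auxiliary $\widetilde\gamma_0$ is needed: one can take $g=\begin{pmatrix} -\overline{p} & * \\ q & * \end{pmatrix}$ directly, since $q\equiv 0\bmod 4$ means such a matrix already lies in $\Gamma_0(4)$, and after adjusting by an upper-triangular integer matrix in $\Gamma_1(4)$ the condition $a\equiv d\equiv 1\bmod 4$ can be met. Fractional-linear action of $\begin{pmatrix} -\overline p & * \\ q & *\end{pmatrix}$ on $\tfrac{p}{q}+\i y$ gives base point $-\tfrac{\overline p}{q}+\i\tfrac{1}{q^2y}$ (using $p\overline p\equiv 1\bmod q$ to simplify the numerator). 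The multiplier computation uses $\e^{\i\beta_g(z)}=(\tfrac{c}{d})\epsilon_d(z)^{1/2}$ with $c=q$, $d$ determined mod $q$; here the subtlety is that $d$ is even, so one must be a little careful with the generalized Jacobi symbol $(\tfrac{q}{d})$ and with which square root branch $\epsilon_d(z)^{1/2}$ selects. Tracking $z$ from $\tfrac{p}{q}+\i y$ down to the cusp as $y\to 0$ fixes the branch and yields $\e^{\i\phi_{p,q}/2}=\epsilon_p(\tfrac{q}{p})\e^{-\i\pi/4}$.

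Finally, case (iii), $q\equiv 2\bmod 4$, is the hybrid: one factors $q=2(q/2)$ with $q/2$ odd, so that after applying $\widetilde\gamma_0$ (which contributes the $2$) one still needs the extra right translation $\widetilde n_-(\tfrac12)$ to account for the parity mismatch — this is why the statement carries the trailing $\widetilde n_-(\tfrac12)$ factor. The base-point computation gives $-\tfrac{\overline{2p}}{q/2}+\i\tfrac{1}{q^2y}$ by the same fractional-linear bookkeeping as in (ii) but with $q/2$ in place of $q$ and an extra shift by $\tfrac12$; the multiplier gives $\e^{\i\phi_{p,q}/2}=\epsilon_{q/2}^{-1}(\tfrac{2p}{q/2})\e^{-\i\pi/4}$, again via Jacobi reciprocity. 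I expect the main obstacle throughout to be the careful bookkeeping of the metaplectic cocycle — i.e. keeping track of the continuous branch of $\beta_g(z)$ along the path from $\tfrac{p}{q}+\i y$ to the relevant cusp, and correctly combining it with the $\arg$ contribution from $\widetilde\gamma_0$ and the constant from $\widetilde n_-(\tfrac12)$ — rather than the $\SL_2$-level matrix algebra or the number-theoretic reciprocity, both of which are routine. A secondary care point is ensuring each constructed $[g,\beta_g]$ genuinely lies in $\Del$ (the $\bmod 4$ congruences on $a,d$ and $\bmod 4$ on $c$), which is what pins down the ambiguity in choosing $g$ from its bottom row and hence makes $\phi_{p,q}$ well defined modulo $4\pi$.
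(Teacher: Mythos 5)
Your overall route --- exhibiting explicit elements of $\Del$ (composed with $\widetilde\gamma_0$, and with $\widetilde n_-(\frac12)$ in case (iii)), reading off the new base point from the fractional linear action and the phase from the metaplectic multiplier, and deducing (iii) from (ii) via the shift $\frac pq=\frac{p_0}{q_0}+\frac12$ --- is exactly the paper's. But two steps as you describe them would fail. First, in case (i) you cannot arrange $a\equiv d\equiv 1\bmod 4$ ``after adjusting by an upper-triangular integer matrix'': since $c\equiv 0\bmod 4$, multiplying $\begin{pmatrix}a&b\\c&d\end{pmatrix}$ on either side by $\begin{pmatrix}1&n\\0&1\end{pmatrix}$ changes $a$ and $d$ only by multiples of $c$, so leaves both unchanged mod $4$; and $d$ is automatically odd (from $ad\equiv 1\bmod 4$), so the ``subtlety that $d$ is even'' does not exist, while the real issue is untouched. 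The paper handles membership in $\Gamma_1(4)$ by a case split with a sign flip: for $p\equiv 3\bmod 4$ take $(c,d)=(q,-p)$, for $p\equiv 1\bmod 4$ take $(c,d)=(-q,p)$. This split is precisely the source of the factor $\epsilon_p$ in $\e^{\i\phi_{p,q}/2}=\epsilon_p(\frac qp)\e^{-\i\pi/4}$ (the two subcases give $\e^{+\i\pi/4}$ and $\e^{-\i\pi/4}$ respectively), and the analogous split $q\equiv 1$ vs $3\bmod 4$ with $(c,d)=(\pm 4p',\pm q)$ produces the $\epsilon_q^{-1}$ in (ii); your sketch has no mechanism generating these $\epsilon$-factors. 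Moreover, no branch-tracking ``as $y\to 0$'' is needed: with these choices $cz+d=\pm\i qy$ at $z=\frac pq+\i y$, so $\arg(cz+d)=\pm\frac\pi2$ and the phase is computed exactly at the point itself.

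Second, in case (ii) the concrete identity you propose is false: the $\HH$-component of $\widetilde\gamma_0^{-1}\,\widetilde n_-(\frac pq)\,\widetilde a(y)$ is $-\frac{1}{4(\frac pq+\i y)}$, whose imaginary part is approximately $\frac{q^2y}{4p^2}$, not $\frac{1}{4q^2y}$, and the discrepancy is not a ``residual integer translation'' for $g$ to absorb. In the paper the work is done by a nontrivial $\widetilde\gamma\in\Del$ with lower row $(\pm 4p',\pm q)$, where $p'\equiv-\overline{4p}\bmod q$, via the identity $\widetilde\gamma\,\widetilde\gamma_0\,(\frac{p'}{q}+\i y',\phi_{p,q})=(\frac pq+\i y,0)$ with $y'=\frac{1}{4q^2y}$; the integer-translation ambiguity enters only at the end (replacing the matrix entry $b$ by any $p\equiv b\bmod q$), and its harmlessness for the coset $\Del\widetilde\gamma_0(\cdot)$ requires the conjugation relation $\widetilde\gamma_0\bigl[\begin{pmatrix}1&\ZZ\\0&1\end{pmatrix},0\bigr]\subset\Del\widetilde\gamma_0$, which your plan does not address. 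Also, no Jacobi reciprocity is used or needed anywhere: since $d=\pm q$, the multiplier appears directly as $(\frac{\pm 4p'}{\pm q})$, a symbol with denominator $q$, and it is the determinant relation $-4p'b\equiv 1\bmod q$ (together with character properties of $(\frac{\cdot}{q})$), not reciprocity, that converts it into $(\frac pq)$. With these corrections supplied, the remaining bookkeeping, including your reduction of (iii) to (ii), matches the paper.
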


\begin{proof}
(i) Assume $p\equiv 3\bmod 4$.
We set $c=q$, $d=-p$ and select $a,b\in\ZZ$ such that $ad-bc=1$; this is always possible since $\gcd(p,q)=1$. Since $c\equiv 0\bmod 4$ and $ad-bc=1$, the condition $d\equiv 1\bmod 4$ implies $a\equiv 1\bmod 4$. Hence, with these choices, 
\begin{equation}\label{tildegam}
\widetilde\gamma =\bigg[ \begin{pmatrix} a & b \\ c & d \end{pmatrix}, \beta_\gamma \bigg]\in \Del,
\end{equation}
where $\beta_\gamma$ is as in \eqref{betafunct}.
A short calculation shows that
$\widetilde\gamma (\frac{p}{q}+\i y,0)=(\frac{a}{q}+\i \frac{1}{q^2 y}, \beta_\gamma(\frac{p}{q}+\i y))$, where 
\begin{equation}\label{inst}
\e^{\i\beta_\gamma(\frac{p}{q}+\i y)/2}=\bigg(\frac{c}{d}\bigg)\,\e^{\i \arg(\i c y)/2}=\bigg(\frac{q}{-p}\bigg)\, \e^{\i \pi/4}=\bigg(\frac{q}{p}\bigg)\, \e^{\i \pi/4} ;
\end{equation}
recall that we only need to determine $\beta_\gamma \bmod 4\pi$ since $[1,4\pi]\in\Del$.
We observe that $ad-bc=1$ implies $a\equiv -\overline p\bmod q$, and finally that the set
$\Del (-\frac{\overline p}{q}+\i \frac{1}{q^2 y}, \phi_{p,q}))$
is well defined, since 
\begin{equation}\label{inv}
\bigg[ \begin{pmatrix} 1 & \ZZ \\ 0 & 1 \end{pmatrix},  0 \bigg] \subset\Del .
\end{equation}

Assume now $p\equiv 1\bmod 4$. We set $c=-q$, $d=p$, and proceed as above. We find $\widetilde\gamma (\frac{p}{q}+\i y,0)=(-\frac{a}{q}+\i \frac{1}{q^2 y}, \beta_\gamma(\frac{p}{q}+\i y))$, but instead of \eqref{inst} we have
\begin{equation}\label{inst11}
\e^{\i\beta_\gamma(\frac{p}{q}+\i y)/2}=\bigg(\frac{c}{d}\bigg) \e^{\i \arg(\i c y)/2}=\bigg(\frac{-q}{p}\bigg)\, \e^{-\i \pi/4}=(-1)^{(p-1)/2} \bigg(\frac{q}{p}\bigg)\,\e^{-\i \pi/4}=\bigg(\frac{q}{p}\bigg)\,\e^{-\i \pi/4} .
\end{equation}
Note that this time $a\equiv \overline p\bmod q$.

(ii) Let $p'$ be a positive integer such that $p'\equiv -\overline{4p}\bmod q$, $y'=\frac{1}{4q^2y}$, and $\phi_{p,q}$ as in the statement (ii) of the lemma. 

Assume $q\equiv 1\bmod 4$. 
For $\widetilde\gamma$ as in \eqref{tildegam} and $c=4p'$, $d=q$, we have
\begin{equation}\label{b4}
\widetilde\gamma\widetilde\gamma_0 \bigg(\frac{p'}{q}+\i y',\phi_{p,q}\bigg) = \bigg(\frac{b}{q}+\i \frac{1}{4q^2 y'}, \phi_{p,q}+\beta_\gamma\bigg(-\frac{1}{4(\frac{p'}{q}+\i y')}\bigg)+\arg\bigg(\frac{p'}{q}+\i y'\bigg) \bigg) ,
\end{equation}
where
\begin{equation}\label{inst1}
\begin{split}
& \exp\bigg\{\frac{\i}{2}\bigg[\beta_\gamma\bigg(-\frac{1}{4(\frac{p'}{q}+\i y')}\bigg)+\arg\bigg(\frac{p'}{q}+\i y'\bigg)\bigg]\bigg\}\\
&=\bigg(\frac{c}{d}\bigg) \exp\bigg\{\frac{\i}{2}\bigg[ \arg\bigg( -\frac{c}{4(\frac{p'}{q}+\i y')}+d\bigg)+\arg\bigg(\frac{p'}{q}+\i y'\bigg)\bigg]\bigg\}\\
&=\bigg(\frac{4p'}{q}\bigg) \exp\bigg\{\frac{\i}{2}\bigg[ \arg\bigg( \frac{\i q y'}{\frac{p'}{q}+\i y'}\bigg)+\arg\bigg(\frac{p'}{q}+\i y'\bigg)\bigg]\bigg\}\\
&=\bigg(\frac{4p'}{q}\bigg) \e^{\i\pi/4}.
\end{split}
\end{equation}
Because $ad-bc=1$, we have $-4p'b\equiv 1\bmod q$. This implies $(\frac{4p'}{q})=(\frac{-b}{q})=(-1)^{(q-1)/2}(\frac{b}{q})=(\frac{b}{q})$. In view of \eqref{inv} the above formulae hold for any integer $p$ such that $p\equiv b\bmod q$. Thus we have shown
\begin{equation}
\widetilde\gamma\widetilde\gamma_0 \bigg(\frac{p'}{q}+\i y',\phi_{p,q}\bigg) = \bigg(\frac{p}{q}+\i y, 0 \bigg) .
\end{equation}
To check that the right hand side of (ii) is well defined, note that
\begin{equation}
\bigg[\begin{pmatrix} 1 & 0 \\ 4 & 1 \end{pmatrix}, \arg(4\,\cdot\, +1) \bigg] \widetilde\gamma_0=\widetilde\gamma_0 \bigg[\begin{pmatrix} 1 & -1 \\ 0 & 1 \end{pmatrix} , 0 \bigg] 
\end{equation}
and so
\begin{equation}
\widetilde\gamma_0 \bigg[\begin{pmatrix} 1 & \ZZ \\ 0 & 1 \end{pmatrix} , 0\bigg] 
=
\bigg[\begin{pmatrix} 1 & 0 \\ 4 & 1 \end{pmatrix}, \arg(4\,\cdot\, +1) \bigg]^\ZZ \widetilde\gamma_0
\subset \Del \widetilde\gamma_0.
\end{equation}
We conclude that $\Del \, \widetilde\gamma_0\, (-\frac{\overline{4p}}{q}+\i \frac{1}{4 q^2 y}, \phi_{p,q})$ is well defined.

The case $q\equiv 3\bmod 4$ follows analogously by picking $c=-4p'$, $d=-q$. Instead of \eqref{b4} we have
\begin{equation}
\widetilde\gamma\widetilde\gamma_0 \bigg(\frac{p'}{q}+\i y',\phi_{p,q}\bigg) = \bigg(-\frac{b}{q}+\i \frac{1}{4q^2 y'}, \phi_{p,q}+\beta_\gamma\bigg(-\frac{1}{4(\frac{p'}{q}+\i y')}\bigg)+\arg\bigg(\frac{p'}{q}+\i y'\bigg) \bigg) ,
\end{equation}
and \eqref{inst1} becomes
\begin{equation}\label{inst2}
\begin{split}
& \exp\bigg\{\frac{\i}{2}\bigg[\beta_\gamma\bigg(-\frac{1}{4(\frac{p'}{q}+\i y')}\bigg)+\arg\bigg(\frac{p'}{q}+\i y'\bigg)\bigg]\bigg\}\\
&=\bigg(\frac{-4p'}{-q}\bigg) \exp\bigg\{\frac{\i}{2}\bigg[ \arg\bigg( -\frac{\i q y'}{\frac{p'}{q}+\i y'}\bigg)+\arg\bigg(\frac{p'}{q}+\i y'\bigg)\bigg]\bigg\}\\
&=- \bigg(\frac{-4p'}{q}\bigg)\, \e^{-\i\pi/4}\\
&=-(-1)^{(q-1)/2} \bigg(\frac{4p'}{q}\bigg) \e^{-\i\pi/4}\\
&=\bigg(\frac{4p'}{q}\bigg) \e^{-\i\pi/4}.
\end{split}
\end{equation}
Now $ad-bc=1$ implies $4p'b\equiv 1\bmod q$, and we conclude as above for any $p$ such that $p\equiv b\bmod q$.

We deduce claim (iii) from (ii): 
Define $q_0=q/2$ and $p_0=\frac14(2p-q)$. Clearly $q_0=1\bmod 2$, $\gcd(p_0,q_0)=1$ and $\frac{p}{q} = \frac{p_0}{q_0} + \frac12$. In view of (ii) we have
\begin{equation}
\bigg(\frac{p}{q}+\i y,0\bigg)=\bigg(\frac{p_0}{q_0}+\i y,0\bigg)\, \widetilde n_-\bigg(\frac12\bigg)\in\Del \, \widetilde\gamma_0\, \bigg(-\frac{\overline{4p_0}}{q_0}+\i \frac{1}{4q_0^2 y}, \phi_{p_0,q_0}\bigg) \;\widetilde n_-\bigg(\frac12\bigg) ,
\end{equation}
where $\e^{i\phi_{p_0,q_0}/2}=\epsilon_{q_0}^{-1} (\frac{p_0}{q_0})\e^{-\i\pi/4}$. We conclude by noting that $(\frac{p_0}{q_0})=(\frac{(2p-q)/4}{q/2})=(\frac{\overline 2}{q/2})^2(\frac{2p-q}{q/2})=(\frac{2p}{q/2})$.
\end{proof}
\begin{lem}\label{klstlem}
Let $m,n\in\ZZ$ with $(m,n)\neq (0,0)$, $p,q\in\NN$, $\gcd(p,q)=1$ and $\epsilon>0$.
\begin{enumerate}
\item[(i)] If $q\equiv 0\bmod 4$ and $\sigma\in\{\pm 1,\pm\i\}$, then
$$
\bigg| \sum_{\substack{p\in\ZZ_q^\times \\ \epsilon_p (\frac{q}{p})=\sigma}} e\bigg(\frac{mp+n\overline p}{q} \bigg)\bigg| \leq 7 \gcd(m,n,q)^{1/2} q^{1/2} \tau(q).
$$
\item[(ii)] If $q\equiv 1\bmod 2$ and $\sigma\in\{\pm 1\}$, then
$$
\bigg| \sum_{\substack{p\in\ZZ_q^\times \\ (\frac{p}{q})=\sigma}} e\bigg(\frac{mp+n\overline p}{q} \bigg)\bigg| \leq 2 \gcd(m,n,q)^{1/2}  q^{1/2} \tau(q).
$$
\end{enumerate}
\end{lem}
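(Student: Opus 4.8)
The plan is to detect the side condition on $p$ by orthogonality of multiplicative characters, which turns each sum into a short linear combination of Kloosterman and Salié sums, and then to invoke Weil's bound for Kloosterman sums together with the classical evaluation of Salié sums. Throughout set $g=\gcd(m,n,q)$ and, for a Dirichlet character $\psi$, write $S_\psi(m,n;q)=\sum_{p\in\ZZ_q^\times}\psi(p)\,e\!\left(\frac{mp+n\overline p}{q}\right)$, so that $S(m,n;q):=S_{\psi_0}(m,n;q)$ (with $\psi_0$ principal) is the ordinary Kloosterman sum, for which $|S(m,n;q)|\le\tau(q)\,g^{1/2}q^{1/2}$ by Weil.

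For (ii), let $\chi=\bigl(\tfrac{\cdot}{q}\bigr)$; by the properties of the Jacobi symbol recalled in Section \ref{preliminaries} this is a real Dirichlet character modulo $q$, trivial precisely when $q$ is a perfect square. Since $\sigma\in\{\pm1\}$ one has $\mathbf 1\bigl[\chi(p)=\sigma\bigr]=\tfrac12\bigl(1+\sigma\chi(p)\bigr)$, so the sum in (ii) equals $\tfrac12\bigl(S(m,n;q)+\sigma\,S_\chi(m,n;q)\bigr)$. Here $S_\chi(m,n;q)$ is a Salié sum, and its classical evaluation at prime moduli combined with twisted multiplicativity at prime powers gives $|S_\chi(m,n;q)|\le 3\,\tau(q)\,g^{1/2}q^{1/2}$ (crude, but ample); the triangle inequality yields the claimed bound.

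For (i), since $q\equiv0\bmod4$ every $p\in\ZZ_q^\times$ is odd, and a short case analysis shows that $\epsilon_p\bigl(\tfrac qp\bigr)=\sigma$ is equivalent to the conjunction ``$p$ lies in a prescribed residue class mod $4$'' and ``$\bigl(\tfrac qp\bigr)=s$'', for an $s\in\{\pm1\}$ determined by $\sigma$. The first condition is detected by $\tfrac12\bigl(1\pm\chi_{-4}(p)\bigr)$, where $\chi_{-4}=\bigl(\tfrac{-1}{\cdot}\bigr)$ is the nontrivial character modulo $4$; the second by $\tfrac12\bigl(1+s\,(\tfrac qp)\bigr)$. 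Multiplying these out, the sum in (i) becomes $\tfrac14$ times a sum of the four terms $S(m,n;q)$, $\pm S_{\chi_{-4}}(m,n;q)$, $\pm s\,S_{\chi'}(m,n;q)$ and $\pm s\,S_{\chi_{-4}\chi'}(m,n;q)$, where $\chi'(p):=\bigl(\tfrac qp\bigr)$; this uses that $p\mapsto\bigl(\tfrac qp\bigr)$ is in fact a real Dirichlet character modulo $q$, which follows since the conductor of $\QQ(\sqrt q)$ divides $q$ when $4\mid q$.

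It remains to bound the three twisted sums by $O\bigl(\tau(q)\,g^{1/2}q^{1/2}\bigr)$ with explicit constants. As $\chi_{-4}$ has conductor $4\mid q$, its finite Fourier expansion exhibits $S_{\chi_{-4}}(m,n;q)$ as a combination (coefficients of modulus $\le\tfrac12$) of the two Kloosterman sums $S(m\pm q/4,n;q)$; a direct $2$-adic check gives $\gcd\bigl(m\pm\tfrac q4,n,q\bigr)\le4g$, so $|S_{\chi_{-4}}(m,n;q)|\le2\,\tau(q)\,g^{1/2}q^{1/2}$. For $S_{\chi'}$ and $S_{\chi_{-4}\chi'}$ one writes $q=2^aq^*$ with $q^*$ odd and $a\ge2$ and uses twisted multiplicativity: the component modulo $q^*$ is a genuine Salié sum, bounded by its classical evaluation with the \emph{unchanged} gcd-parameter $\gcd(m,n,q^*)$ (the twist is by a unit mod $q^*$), while the component modulo $2^a$ carries a character of conductor dividing $8\mid q$, which is again turned into a bounded combination of Kloosterman sums modulo $2^a$ by a finite Fourier expansion; multiplying the two component bounds (and using $\gcd(m,n,2^a)^{1/2}\gcd(m,n,q^*)^{1/2}=g^{1/2}$ and $\tau(2^a)\tau(q^*)=\tau(q)$) yields $|S_{\chi'}(m,n;q)|,\,|S_{\chi_{-4}\chi'}(m,n;q)|\ll\tau(q)\,g^{1/2}q^{1/2}$. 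Collecting the four pieces and bookkeeping the absolute constants produces the bound with constant $7$. I expect the main obstacle to be precisely this bookkeeping: $\bigl(\tfrac q\cdot\bigr)$ is not a character of conductor exactly $q$ and its $2$-adic part is not a Salié sum, so $S_{\chi'}$ cannot be estimated in one stroke, and one must keep the various argument-shifts in check so that the final estimate retains the factor $g^{1/2}=\gcd(m,n,q)^{1/2}$ rather than a larger gcd coming from the shifted arguments.
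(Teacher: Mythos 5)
Your argument is correct in substance, and for part (ii) it is essentially the paper's proof: detect the condition $(\tfrac pq)=\sigma$ by orthogonality, split into the Kloosterman sum (Weil's bound) plus the Sali\'e sum twisted by $(\tfrac{\cdot}{q})$, for which the paper simply cites Chinen's bound $|S(m,n,q)|\le \gcd(m,n,q)^{1/2}q^{1/2}\tau(q)$ rather than re-deriving it from the classical evaluation and twisted multiplicativity as you propose. For part (i) your route genuinely differs. The paper detects the condition $\epsilon_p(\tfrac qp)=\sigma$ by summing $\sigma^{-k}\bigl(\epsilon_p(\tfrac qp)\bigr)^k$ over $k\in\ZZ_4$: the $k=\pm1$ terms are Kloosterman sums twisted by the theta multiplier $\epsilon_p(\tfrac qp)$ itself, bounded by citing Chinen and Duke--Friedlander--Iwaniec, while the $k=2$ term uses $\epsilon_p^2=\e^{2\pi\i(p-1)/4}$ to produce the shifted Kloosterman sum $K(m+\tfrac q4,n,q)$ together with the gcd inequality $\gcd(m+\tfrac q4,n,q)\le 4\gcd(m,n,q)$ --- exactly the shift-and-gcd control you use for $S_{\chi_{-4}}$. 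You instead factor the condition into a congruence mod $4$ and the value of $(\tfrac qp)$, which is legitimate (and your justification that $(\tfrac q\cdot)$ is a character mod $q$ when $4\mid q$, via the conductor of $\QQ(\sqrt q)$, is sound), and you then bound the resulting Dirichlet-twisted sums by hand via twisted multiplicativity, the Sali\'e evaluation on the odd part, and a finite Fourier expansion on the $2$-part. What the paper's route buys is brevity and sharper constants, since the Weil-quality bound for the theta-multiplier twist is taken from the literature in one stroke; what your route buys is self-containedness, at the cost of the $2$-adic bookkeeping you yourself flag. Two small caveats: your claim ``conductor dividing $8\mid q$'' is inaccurate when $q\equiv 4\bmod 8$ (there $8\nmid q$, but then the $2$-part character has conductor dividing $4$, so nothing breaks), and the constant $7$ is asserted rather than verified --- harmless for the application, where any absolute constant suffices, but it means your write-up does not yet literally establish the stated inequality.
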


\begin{proof}
(i) We have
\begin{equation}
\sum_{\substack{p\in\ZZ_q^\times \\ \epsilon_p (\frac{q}{p})=\sigma}} e\bigg(\frac{mp+n\overline p}{q} \bigg)
=\sum_{k\in\ZZ_4}   \sigma^{-k}  \sum_{p\in\ZZ_q^\times} \epsilon_p^k \bigg(\frac{q}{p}\bigg)^k e\bigg(\frac{mp+n\overline p}{q} \bigg) .
\end{equation}
For $k=0$ the inner sum is the Kloosterman sum
\begin{equation}
K(m,n,q)=\sum_{p\in\ZZ_q^\times} e\bigg(\frac{mp+n\overline p}{q} \bigg),
\end{equation}
and for $k=1$ the twisted Kloosterman sum
\begin{equation}
\widetilde K(m,n,q)=\sum_{p\in\ZZ_q^\times} \epsilon_p \bigg(\frac{q}{p}\bigg) e\bigg(\frac{mp+n\overline p}{q} \bigg).
\end{equation}
The case $k=-1$ yields the complex conjugate of $\widetilde K(-m,-n,q)$. The case $k=2$ yields
\begin{equation}
\sum_{p\in\ZZ_q^\times} \epsilon_p^2\, e\bigg(\frac{mp+n\overline p}{q} \bigg)
=\sum_{p\in\ZZ_q^\times}  e\bigg(\frac{p-1}{4}+\frac{mp+n\overline p}{q} \bigg)
=-\i\,K\bigg(m+\frac{q}{4},n,q\bigg).
\end{equation}

The claim follows now from the triangle inequality and the classical Weil bound $|K(m,n,q)|\leq \gcd(m,n,q)^{1/2}  q^{1/2} \tau(q)$ \cite{Esterman}, where $\tau(q)$ is the number of positive divisors of $q$, its analogue in the twisted case, $|\widetilde K(m,n,q)|\leq \gcd(m,n,q)^{1/2}  q^{1/2} \tau(q)$ see \cite{Chinen,Duke}, and the inequality $\gcd(m+\frac{q}{4},n,q)\leq  \gcd(4m+q,n,q)=\gcd(4m,n,q)\leq 4\gcd(m,n,q)$.

(ii) We have for $\sigma=\pm 1$,
\begin{equation}
\sum_{\substack{p\in\ZZ_q^\times \\ (\frac{p}{q})=\sigma}} e\bigg(\frac{mp+n\overline p}{q} \bigg)
=
\sum_{k\in\ZZ_2}   \sigma^{-k}  \sum_{p\in\ZZ_q^\times}  \bigg(\frac{p}{q}\bigg)^k e\bigg(\frac{mp+n\overline p}{q} \bigg) .
\end{equation}
The claim follows again from Weil's bound for the classical Kloosterman sum and the analogous bound $|S(m,n,q)|\leq \gcd(m,n,q)^{1/2}  q^{1/2} \tau(q)$ for the Sali\'e sum
\begin{equation}
S(m,n,q)=\sum_{p\in\ZZ_q^\times}  \bigg(\frac{p}{q}\bigg) e\bigg(\frac{mp+n\overline p}{q} \bigg),
\end{equation}
which is proved in \cite{Chinen}.
\end{proof}

\begin{proof}[The proof of Proposition \ref{lemEqui}]

We set
\begin{equation}
f_0(\xi,g):=
\begin{cases}
f(\xi, g\, \widetilde k(\widetilde\phi)) & \text{ in case (i),} \\
f(\xi, \widetilde\gamma_0\, g\, \widetilde k(\widetilde\phi)) & \text{ in case (ii),} \\
f(\xi, \widetilde\gamma_0\, g\, \widetilde k(\widetilde\phi)\, \widetilde n_-(\frac12)) & \text{ in case (iii),} 
\end{cases}
\end{equation}
where $\widetilde k(\widetilde\phi)=(\i, \widetilde\phi)$.
Because $\widetilde\gamma_0 \Del= \Del \widetilde\gamma_0$, and furthermore since right multiplication by a fixed element in $\USLR$ preserves $C_0^\infty$, we have $f_0\in\C_0^\infty\big(\TT\times\scrM\big)$.

In view of \eqref{inv} the function $f_0(\xi,(x+\i y,0))$ is periodic of period one not only in $\xi$ but also in $x$. Its Fourier expansion reads
\begin{equation}
f_0(\xi,(x+\i y,0))= \sum_{m,n\in\ZZ} a(m,n,y) \,e(m\xi+nx) ,
\end{equation}
where
\begin{equation}
a(m,n,y) = \int_{\TT^2} f_0(\xi,(x+\i y,0)) \,e(-m\xi-nx) \, d\xi\,dx.
\end{equation}
An argument similar to the one leading to Eq.~(6.17) in \cite{Marklof03} yields, for every $A,B\geq 0$,
\begin{equation}\label{fourierbound}
\big| a(m,n,y) \big| \leq C_{A,B} (1+|m|)^{-B} (1+|n|y)^{-A},
\end{equation}
where $C_{A,B}>0$ is a constant independent of $m,n$ and $y$.

We now consider the distinct cases:

(i) By Lemma \ref{fltlem} (i), we have
\begin{equation}
\begin{split}
\sum_{\substack{p\in\ZZ_q^\times \\ \phi_{p,q}=\widetilde\phi}} f\bigg(\frac{p}{q},\bigg(\frac{p}{q}+\i y,0\bigg)\bigg)  & =
\sum_{\substack{p\in\ZZ_q^\times \\ \phi_{p,q}=\widetilde\phi}} f_0\bigg(\frac{p}{q},\bigg(-\frac{\overline p}{q}+\i \frac{1}{q^2y}, 0 \bigg)\bigg) \\
& =\sum_{m,n\in\ZZ} a\bigg(m,n,\frac{1}{q^2y}\bigg) \sum_{\substack{p\in\ZZ_q^\times \\ \phi_{p,q}=\widetilde\phi}} e\bigg(\frac{mp-n\overline p}{q}\bigg) .
\end{split}
\end{equation}
In order to get the rate of convergence we need to bound
\begin{equation}\label{convergence}\sum_{(m,n)\neq(0,0)}(1+|m|)^{-100}\,(1+|n|\frac{1}{q^2y})^{-100}\,\frac{1}{\varphi(q)}\,\bigg|\sum_{\substack {p\in\ZZ_q^{\times}\\ \phi_{p,q}=\widetilde\phi}}\e\bigg(\frac{mp-n\bar p}{q}\bigg)\bigg|.
\end{equation}

 Note that the inner sum in Equation \eqref{convergence} is a Kloosterman sum and Lemma \ref{klstlem} (i) is can be employed. We now deal with different cases:

(a) The sum \eqref{convergence} restricted to $m\nequiv 0\bmod q$ is bounded above by
\begin{equation}\sum_{m,n}(1+|m|)^{-100}\;\bigg(1+|n|\frac{1}{q^2y}\bigg)^{-100}\,\frac{1}{\varphi(q)} \gcd(m, n, q)^{1/2}\tau(q)q^{1/2}.
\end{equation}
Since $\gcd(m,n,q)^{1/2}\leq |m|^{1/2}$, and $\sum_m (1+|m|)^{-100}|m|^{1/2}$ converges, and by combining the standard estimates $\varphi(q)=(q'-1)\,\varphi(a)$ and $\tau(q)=O(1)$ (since $q=a q'$ and $\gcd(q',a)=1$) for $q'$ sufficiently large, we then have 
\begin{equation}\begin{split}
&\ll\sum_{n}\bigg(1+|n|\frac{1}{q'^2y}\bigg)^{-100}\,\frac{\sqrt{q'}}{q'-1}\\
 &\ll q^{3/2}y,
\end{split}\end{equation}
where the implied constant depends on $a$.

(b) If the sum \eqref{convergence} is restricted to $m\equiv0\bmod q$, set $m=k q$ and $q=aq'$, we get the following bound:
\begin{equation}\begin{split}
&\sum_{k}(1+|kq|)^{-100}\,\sum_{n}\bigg(1+|n|\frac{1}{q^2y}\bigg)^{-100}\frac{1}{\varphi(q)}\gcd(kq,n,q)^{1/2}\,\tau(q)\,q^{1/2}\\
&\ll\frac{1}{q'-1}\bigg[\sum_{\substack{n\\q'|n}}\bigg(1+|n|\frac{1}{q'^2y}\bigg)^{-100}\,q'+\sum_{\substack{n\\q'\nmid n}}\bigg(1+|n|\frac{1}{q'^2y}\bigg)^{-100}\,q'^{1/2}\bigg].\\
\end{split}\end{equation}
The sums above are Riemann sums, then we have the bound
\begin{equation}\begin{split}
&\frac{1}{q'-1}\bigg[\sum_{t}\bigg(1+|t|\frac{1}{q' y}\bigg)^{-100}\,q'+\sum_{\substack{n\\q'\nmid n\\|n|\leq q'^2 y}}q'^{1/2}\bigg]\ll q^{3/2}y.
\end{split}\end{equation}
The implied constant again depends on $a$.

(ii) By Lemma \ref{fltlem} (ii), we have
\begin{equation}
\begin{split}
\sum_{\substack{p\in\ZZ_q^\times \\ \phi_{p,q}=\widetilde\phi}} f\bigg(\frac{p}{q},\bigg(\frac{p}{q}+\i y,0\bigg)\bigg) & =
\sum_{\substack{p\in\ZZ_q^\times \\ \phi_{p,q}=\widetilde\phi}} f_0\bigg(\frac{p}{q},\bigg(-\frac{\overline{4p}}{q}+\i \frac{1}{4q^2 y}, 0 \bigg)\bigg) \\
& =\sum_{m,n\in\ZZ} a\bigg(m,n,\frac{1}{4q^2 y}\bigg) \sum_{\substack{p\in\ZZ_q^\times \\ \phi_{p,q}=\widetilde\phi}} e\bigg(\frac{mp-\overline4 n\,\overline{p}}{q}\bigg) .
\end{split}
\end{equation}
We conclude as in case (i).

(iii) By Lemma \ref{fltlem} (iii), we have
\begin{equation}
\begin{split}
\sum_{\substack{p\in\ZZ_q^\times \\ \phi_{p,q}=\widetilde\phi}} f\bigg(\frac{p}{q},\bigg(\frac{p}{q}+\i y,0\bigg)\bigg) & =
\sum_{\substack{p\in\ZZ_q^\times \\ \phi_{p,q}=\widetilde\phi}} f_0\bigg(\frac{p}{q},\bigg(-\frac{\overline{2p}}{q/2}+\i \frac{1}{q^2y}, 0 \bigg)\bigg) \\
& =\sum_{m,n\in\ZZ} a\bigg(m,n,\frac{1}{q^2y}\bigg) \sum_{\substack{p\in\ZZ_q^\times \\ \phi_{p,q}=\widetilde\phi}} e\bigg(\frac{mp-\overline2 n\,\overline p}{q/2}\bigg) .
\end{split}
\end{equation}
We proceed as above, but use the bound in Lemma \ref{klstlem} (ii) with $q/2$ in place of $q$.
\end{proof}

\begin{prop}\label{thmEqui}
Let $f\in\C^\infty_{0}({\TT\times\scrM})$.
Then, for $q^2y\rightarrow \infty$, one has

\begin{enumerate}
\item[(i)] If $q\equiv0\bmod4$, then
\begin{equation}\begin{split} & \int_{\TT^2} f\bigg(\xi, \bigg(x+\i \frac{1}{q^2y}, 0\bigg)\bigg)\, d\xi\,dx =
\frac{1}{8\pi^2}
\int_{\TT\times{\scrM}} f(\xi,z,\widetilde\phi)\;d\xi\;d\mu(z,\widetilde\phi)\\
&+
\frac{1}{8 \pi^2}
\bigg[\sum_{\nu=0}^\infty h_{2\nu}(0) \int_{\TT\times{\scrM}} f(\xi,z,\widetilde\phi) 
\Re\{ \theta_{2\nu}(z,\widetilde\phi)\}\,
d\xi\; d\mu(z,\widetilde\phi)\bigg]\; \bigg(\frac{1}{q^2y}\bigg)^{1/4} +o\bigg(\frac{1}{\sqrt{q^2y}}\bigg).
\end{split}\end{equation}
\item[(ii)] If $q\equiv1\bmod2$, then
\begin{equation}\begin{split}
& \int_{\TT^2} f\bigg(\xi, \widetilde\gamma_0\bigg(x+\i \frac{1}{4q^2y}, 0\bigg)\bigg)\, d\xi\,dx =
\frac{1}{8\pi^2}
\int_{\TT\times{\scrM}} f(\xi,z,\widetilde\phi)\;d\xi\;d\mu(z,\widetilde\phi)\\
&+
\frac{1}{8 \pi^2}
\bigg[\sum_{\nu=0}^\infty h_{2\nu}(0) \int_{\TT\times{\scrM}} f(\xi,z,\widetilde\phi) 
\Re\{ \theta_{2\nu}(z,\widetilde\phi)\}\,
d\xi\; d\mu(z,\widetilde\phi)\bigg]\; \bigg(\frac{1}{4q^2y}\bigg)^{1/4} +o\bigg(\frac{1}{\sqrt{q^2y}}\bigg).
\end{split}
\end{equation}
\item[(iii)] If $q\equiv2\bmod4$, then
\begin{equation}\begin{split}
& \int_{\TT^2} f\bigg(\xi, \widetilde\gamma_0\bigg(x+\i \frac{1}{q^2y}, 0\bigg)\widetilde n_-\bigg(\frac12\bigg)\bigg)\, d\xi\,dx =
\frac{1}{8\pi^2}
\int_{\TT\times{\scrM}} f(\xi,z,\widetilde\phi)\;d\xi\;d\mu(z,\widetilde\phi)\\
&+
\frac{1}{8 \pi^2}
\bigg[\sum_{\nu=0}^\infty h_{2\nu}(0) \int_{\TT\times{\scrM}} f(\xi,z,\widetilde\phi) 
\Re\{ \theta_{2\nu}(z,\widetilde\phi)\}\,
d\xi\; d\mu(z,\widetilde\phi)\bigg]\; \bigg(\frac{1}{q^2y}\bigg)^{1/4} +o\bigg(\frac{1}{\sqrt{q^2y}}\bigg).
\end{split}\end{equation}
\end{enumerate}
\end{prop}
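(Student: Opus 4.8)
The plan is to reduce all three cases to Marklof's equidistribution theorem \cite{Marklof} for closed horocycles in $\scrM=\DelSLR$, which supplies, for any test function in $\C_0^\infty(\scrM)$, both the ergodic average $\tfrac1{\mu(\scrM)}\int_\scrM(\cdot)\,d\mu$ and the secondary term of order $(\text{height})^{1/4}$ carrying the theta functions $\theta_{2\nu}$, and then to transport the resulting integrals back through the explicit isometries appearing on the left-hand sides. Recall that $\mu(\scrM)=8\pi^2$, which is what produces the common prefactor $\tfrac1{8\pi^2}$.

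First I would absorb the twisting elements into the test function: put $\widetilde f:=f$ in case (i), $\widetilde f(\xi,g):=f(\xi,\widetilde\gamma_0\,g)$ in case (ii), and $\widetilde f(\xi,g):=f(\xi,\widetilde\gamma_0\,g\,\widetilde n_-(\tfrac12))$ in case (iii). Since $\widetilde\gamma_0\Del=\Del\widetilde\gamma_0$, left translation by $\widetilde\gamma_0$ descends to a $\mu$-preserving diffeomorphism of $\scrM$, and right translation by the fixed element $\widetilde n_-(\tfrac12)$ preserves $\mu$ and $\C_0^\infty$; hence $\widetilde f\in\C_0^\infty(\TT\times\scrM)$ in all three cases. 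With $Y:=\tfrac1{q^2y}$ in cases (i) and (iii) and $Y:=\tfrac1{4q^2y}$ in case (ii), the left-hand side of each claim equals $\int_\TT\!\int_0^1\widetilde f\big(\xi,(x+\i Y,0)\big)\,dx\,d\xi$; by \eqref{inv} the curve $x\mapsto(x+\i Y,0)$ is periodic of period one modulo $\Del$, so this is a genuine closed-horocycle integral, and $q^2y\to\infty$ is exactly the condition $Y\to0$.

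Next I would apply Marklof's theorem, treating $\xi$ as a parameter — concretely, as in the proof of Proposition~\ref{lemEqui}, by expanding $\widetilde f$ in a rapidly convergent Fourier series in $\xi$, applying the theorem to each Fourier coefficient (a function in $\C_0^\infty(\scrM)$) and resumming, the implied constants being controlled by a fixed Sobolev seminorm of $f$. This gives
\begin{equation*}
\int_\TT\!\int_0^1\widetilde f\big(\xi,(x+\i Y,0)\big)\,dx\,d\xi=\frac{1}{8\pi^2}\int_{\TT\times\scrM}\widetilde f\,d\xi\,d\mu+\frac{Y^{1/4}}{8\pi^2}\sum_{\nu\ge0}h_{2\nu}(0)\int_{\TT\times\scrM}\widetilde f(\xi,z,\phi)\,\Re\{\theta_{2\nu}(z,\phi)\}\,d\xi\,d\mu+o\big(Y^{1/2}\big),
\end{equation*}
and substituting the value of $Y$ turns $Y^{1/4}$ into $(q^2y)^{-1/4}$ resp. $(4q^2y)^{-1/4}$ and $o(Y^{1/2})$ into $o\big((q^2y)^{-1/2}\big)$.

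It remains to replace $\widetilde f$ by $f$. In the main term, changing variables by the $\mu$-preserving diffeomorphism induced by $\widetilde\gamma_0$ (and, in case (iii), by the right translation by $\widetilde n_-(\tfrac12)$) yields $\int_{\TT\times\scrM}\widetilde f\,d\xi\,d\mu=\int_{\TT\times\scrM}f\,d\xi\,d\mu$ in all three cases. In the secondary term one must also push $\Re\{\theta_{2\nu}(z,\phi)\}$ through these maps: \eqref{trasprop}, together with its analogues for the Fricke-type element $\widetilde\gamma_0$ and the half-integer translation $\widetilde n_-(\tfrac12)$, shows that $\theta_{2\nu}$ is an eigenfunction of the corresponding operators with a root-of-unity eigenvalue, and tracking this phase is exactly what converts $\Re\{\theta_{2\nu}(z,\phi)\}$ into $\Re\{\theta_{2\nu}(z,\widetilde\phi)\}$ as recorded in the statement (in case (i) there is nothing to do). Assembling the pieces gives the three formulas. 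The step I expect to be the main obstacle is precisely this last one for cases (ii) and (iii): going beyond the $\Gamma_0(4)$-automorphy \eqref{trasprop} to the precise behaviour of $\theta_\nu$ under $\widetilde\gamma_0$ and under $\widetilde n_-(\tfrac12)$, and checking that the accumulated phase is the one appearing in the statement; a minor secondary point is to confirm that Marklof's remainder is genuinely $o(Y^{1/2})$ uniformly in the parameter $\xi$.
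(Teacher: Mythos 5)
Your proposal follows essentially the same route as the paper: case (i) is a direct application of Marklof's Theorem 5.1 for closed horocycles in the metaplectic group (cusp at $\infty$), and cases (ii) and (iii) are reduced to it by absorbing $\widetilde\gamma_0$ and $\widetilde n_-(\tfrac12)$ into the test function, using $\widetilde\gamma_0\Del\widetilde\gamma_0^{-1}=\Del$ and the fact that translation by a fixed group element preserves $\C_0^\infty$, so that the cusps at $0$ and $\tfrac12$ are referred to the standard cusp at $\infty$. The phase-tracking point you flag for the secondary term is not treated explicitly in the paper either (it simply asserts the asymptotics for the transported test function), and note that $\widetilde\phi$ in the statement is merely the name of the integration variable rather than a modified argument of $\theta_{2\nu}$.
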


\begin{proof}The proof of case (i) is a direct consequence of Theorem 5.1 for closed horocycles in the metaplectic group \cite{Marklof} (which generalizes Sarnak's work \cite{Sarnak81} ), corresponding to the cusp at $\infty$. For the remaining cases, the statement of theorem holds for the test function 
\begin{equation}
f_1(\xi,z,\widetilde\phi):=
\begin{cases}
f(\xi, \widetilde\gamma_0\,(z,\widetilde\phi)) & \text{ in case (ii),} \\
f(\xi, \widetilde\gamma_0(z,\widetilde\phi)\, \widetilde n_-(\frac12)) & \text{ in case (iii),} 
\end{cases}
\end{equation}
 since $\widetilde\gamma_0 [g, \beta_g]\widetilde\gamma_0^{-1}\in\Del$ when $[g,\beta_g]$ in $\Del$ and right multiplication a fixed element in $\USLR$ preserves $C_0^\infty$, we have $f_1\in\C_0^\infty\big(\TT\times\scrM\big)$. Hence the cusps with respect to at 0 and 1/2 are represented as the standard cusp at $\infty$.
\end{proof}

The proof of Theorem \ref{mtplctcEq} is now easily completed by combining Proposition \ref{lemEqui} and Proposition \ref{thmEqui}.

\section{Main theorem}\label{maintheorem}
We generalize Theorem \ref{thm1} by considering the joint distribution of the classical Gauss sum $g_1(p,q)$ and the short incomplete Gauss sum $g_f(p,q,N)$. The classical Gauss sum is
\begin{equation}
g_1(p,q)=\sum_{h\bmod q} e_q(p h^2),
\end{equation}
where $p<q$ are coprime integers, and $e_q(x)=\e^{2\pi\i x/q}$.
The classical Gauss sum can be evaluated explicitly:
\begin{equation}\label{GS} 
g_1(p,q)=
\begin{cases}
(1+\i)\; \epsilon_p^{-1} (\frac{q}{p})\; \sqrt q
& \text{if $q\equiv 0\bmod 4$} \\ 
\epsilon_q (\frac{p}{q}) \;\sqrt q
& \text{if $q\equiv 1\bmod 2$}\\
0 & \text{if $q\equiv 2\bmod 4$}
\end{cases}
\end{equation}
where $(\frac{a}{b})$ and $\epsilon_a$ are defined as in Section \ref{preliminaries}.

Let us now consider the theta function
\begin{equation}
\Theta_f(z,\phi)=y^{1/4}\sum_{h\in\ZZ} f_{\phi}(h y^{1/2})\,e(x h^2)
\end{equation}
where $$f_\phi(t)=\sum_{nu=0}^\infty \hat f(\nu)\,\e^{-\i(2\nu+1)\phi/2}\,h_\nu(t)$$ with $\nu$-th Hermite coefficient $\hat f=(f,h_{\nu})$ (cf. \eqref{hermite}). Marklof \cite{Marklof99} has showed that $\Theta_f(z,\phi)$ is a function on $\scrM$.
For the rational points $\frac pq+\i\frac{1}{N^2}$ on the metaplectic horocycles we clearly have
\begin{equation}\label{ThetaGauss1}
\Theta_f\bigg(\frac pq+\i \frac{1}{N^2},0\bigg)=\frac{g_f(p,q,N)}{\sqrt N}
\end{equation}
with $y=N^{-2}$.
The limit distribution $\nu_f$ in Theorem \ref{thm1} is given by
\begin{equation}\int_{\CC} F(z)\,\nu_f(d z)=\int_{\scrM} F(\Theta(g))\,d\mu(g).
\end{equation}
That is, the limit distribution of short incomplete Gauss sums is characterized by the theta series $\Theta_f(g)$ with $g=(z,\phi)$ uniformly distributed on $\scrM$ with respect to Haar measure $d \mu(g)=\frac{dx\,dy\,d\phi}{y^2}$. We define furthermore the following random variables: The random variable $X$ takes the values $\pm1 \pm\i$ with equal probability and the random variable $Y$ takes the values $\pm 1$ with equal probability.

The symbol $\xrightarrow{d}$ denotes convergence in distribution.

\begin{thm}\label{shortgauss}
Fix $a\in\NN$ and a subset $\scrD\subset\TT$ with $|\partial\scrD|=0$, and $f:\RR\to\RR$ Riemann integrable and compactly supported. For each $q\in\NN$, choose $p\in\ZZ_q^\times\cap q\scrD$ at random with uniform probability. As $q\to \infty$ along any subsequence with $q=aq'$, $q'$ prime, $\frac{N}{q}\to0$ and $\frac{N^{4/3}}{q}\to\infty$, then:

\begin{enumerate}
\item[(i)] If $q\equiv 0\bmod 4$, then
\begin{equation}
\bigg( \frac{g_1(p,q)}{\sqrt{q}}, \frac{g_f(p,q,N)}{\sqrt{N}} \bigg) \xrightarrow{d} (X,\Theta_f) .
\end{equation}
\item[(ii)] 
If $q\equiv 1\bmod 2$, then
\begin{equation}
\bigg( \frac{g_1(p,q)}{\epsilon_q\sqrt q}, \frac{g_f(p,q,N)}{\sqrt N} \bigg) \xrightarrow{d} (Y,\Theta_f) .
\end{equation}
\item[(iii)] 
If $q\equiv 2\bmod 4$, then
\begin{equation}
\bigg( \frac{g_1(2p,q/2)}{\epsilon_{q/2}\sqrt{q/2}}, \frac{g_f(p,q,N)}{\sqrt N} \bigg) \xrightarrow{d} (Y, \Theta_f).
\end{equation}
\end{enumerate}
\end{thm}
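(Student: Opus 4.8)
The plan is to deduce Theorem~\ref{shortgauss} from the metaplectic equidistribution theorem~\ref{mtplctcEq} together with the identity \eqref{ThetaGauss1} relating short Gauss sums to the theta function $\Theta_f$, treating smooth weights first and passing to Riemann integrable ones by approximation. The key preliminary remark is that in each of (i)--(iii) the first coordinate is a fixed function of the metaplectic angle $\phi_{p,q}$: combining \eqref{GS} with Lemma~\ref{fltlem} one finds, in case (i), $g_1(p,q)/\sqrt q=\sqrt2\,\e^{-\i\phi_{p,q}/2}$, which sweeps out the four values $\pm1\pm\i$ of $X$ as $\widetilde\phi$ runs over its admissible set $\{\pm\tfrac\pi2,\pm\tfrac{3\pi}2\}$; in case (ii), $g_1(p,q)/(\epsilon_q\sqrt q)=(\tfrac pq)$, and in case (iii) (where $g_1(p,q)=0$) one replaces it by $g_1(2p,q/2)/(\epsilon_{q/2}\sqrt{q/2})=(\tfrac{2p}{q/2})$, each a function of $\phi_{p,q}$ sweeping out the two values $\pm1$ of $Y$. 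Writing $c(\widetilde\phi)$ for this constant value on the block $\{p:\phi_{p,q}=\widetilde\phi\}$ and using that $X$ (resp.\ $Y$) is finitely supported, the asserted convergence in distribution of the pair is equivalent to: for every bounded continuous $G:\CC\to\RR$ and every admissible $\widetilde\phi$,
\[
\frac1{|\ZZ_q^\times\cap q\scrD|}\sum_{\substack{p\in\ZZ_q^\times\cap q\scrD\\ \phi_{p,q}=\widetilde\phi}} G\!\left(\frac{g_f(p,q,N)}{\sqrt N}\right)\ \longrightarrow\ \frac1{m}\int_{\CC}G(z)\,\nu_f(dz),
\]
with $m=4$ in case (i) and $m=2$ in cases (ii),(iii); the $\widetilde\phi$-independence of the right-hand side is precisely the asymptotic independence of $X$ (resp.\ $Y$) and $\Theta_f$.

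For smooth $f$, \eqref{ThetaGauss1} with $y=N^{-2}$ gives $g_f(p,q,N)/\sqrt N=\Theta_f(\tfrac pq+\i y,0)$, and $\Theta_f$ is a continuous function on $\scrM$ by Marklof~\cite{Marklof99}. I would write the left-hand side above as a $\varphi(q)^{-1}$-normalised sum over $p$ with $\phi_{p,q}=\widetilde\phi$ of $\mathbf 1_\scrD(\tfrac pq)\,G(\Theta_f(\tfrac pq+\i y,0))$ and invoke Theorem~\ref{mtplctcEq}. Two routine reductions bring the test function into $\C_0^\infty(\TT\times\scrM)$: sandwich $\mathbf 1_\scrD$ between smooth $\psi_\pm$ with $\int(\psi_+-\psi_-)$ small (valid since $|\partial\scrD|=0$), and multiply by a smooth cutoff $\chi_R\circ\Theta_f$, equal to $1$ on $\{|z|\le R\}$ and supported on $\{|z|\le2R\}$. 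For these $\C_0^\infty$ functions Theorem~\ref{mtplctcEq} gives a main term proportional to $\int_{\TT\times\scrM}\mathbf 1_\scrD(\xi)(\chi_R G)(\Theta_f(g))\,d\xi\,d\mu(g)$ (with the explicit constant there), a secondary term of order $(q^2y)^{-1/4}=(N/q)^{1/2}=o(1)$, and an error $O(q^{3/2}y)+o((q^2y)^{-1/2})=O(q^{3/2}N^{-2})+o(N/q)$ which tends to $0$ precisely because $N/q\to0$ and $N^{4/3}/q\to\infty$. Fixing $R$, letting $q\to\infty$, then $\psi_\pm\to\mathbf 1_\scrD$, then $R\to\infty$: the geometric integral converges, since $\nu_f=\mu(\scrM)^{-1}\Theta_{f*}\mu$ is a probability measure, to $|\scrD|\int_\CC G\,d\nu_f$ up to the constant of the theorem, which one checks equals $\tfrac1m$; on the arithmetic side the mass cut away is negligible uniformly in $q$, because Theorem~\ref{mtplctcEq} applied to $\C_0^\infty$ minorants and majorants of $\chi_R$, together with the identity $\sum_{\widetilde\phi}|\{p:\phi_{p,q}=\widetilde\phi\}|=\varphi(q)$, shows no mass escapes up the cusp, so the fraction of $p$ with $|\Theta_f(\tfrac pq+\i y,0)|>R$ is $\tfrac1m\nu_f(|z|>R)+o(1)$ and $\nu_f(|z|>R)\ll R^{-4}$ (Jurkat--van Horne, Marklof). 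The case $G\equiv1$ handled identically gives $|\ZZ_q^\times\cap q\scrD|\sim|\scrD|\varphi(q)$, and summing over the $m$ admissible $\widetilde\phi$ yields $\mathbf E[F(X,\Theta_f)]$, resp.\ $\mathbf E[F(Y,\Theta_f)]$, for smooth $f$.

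For Riemann integrable $f$ with compact support it suffices to take $F$ bounded Lipschitz, and then to squeeze $f_-\le f\le f_+$ by smooth $f_\pm$ with $\int(f_+-f_-)<\varepsilon$. By linearity $\Theta_f-\Theta_{f_\pm}=\Theta_{f-f_\pm}$, so the Lipschitz bound on $F$ and Cauchy--Schwarz reduce the discrepancy between the averages for $f$ and for $f_\pm$ to the second moment $\tfrac1{\varphi(q)}\sum_p\big|\Theta_{f_+-f_-}(\tfrac pq+\i y,0)\big|^2$; this is exactly the mean-square estimate of Section~\ref{riemann}, bounded by $\ll\int(f_+-f_-)^2+o(1)=O(\varepsilon)+o(1)$ (again using $N^{4/3}/q\to\infty$). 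Combined with the smooth case applied to $f_\pm$, this traps the $f$-average within $O(\varepsilon)$ of $\mathbf E[F(X,\Theta_f)]$, resp.\ $\mathbf E[F(Y,\Theta_f)]$; letting $\varepsilon\to0$ completes the proof.

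The main obstacle is the interplay, in the smooth step, between the non-compactness of $g\mapsto G(\Theta_f(g))$ on $\scrM$ --- $\Theta_f$ is unbounded near the cusp and $\nu_f$ is heavy-tailed --- and the arithmetic error of Theorem~\ref{mtplctcEq}: the cutoff $\chi_R\circ\Theta_f$ is supported up into the cusp to height $\sim R^4$, so the implied constant in $O(q^{3/2}y)$ depends on $R$; one must therefore fix $R$ before letting $q\to\infty$ (or let $R=R(q)\to\infty$ slowly enough that $R(q)^{C}q^{3/2}N^{-2}\to0$ for a suitable constant $C$), and it is precisely the hypothesis $N^{4/3}/q\to\infty$ that leaves the necessary room. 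The Riemann-integrable reduction cannot be carried out by this cutoff device and genuinely requires the separate $L^2$ input of Section~\ref{riemann}.
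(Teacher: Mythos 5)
Your proposal is correct and rests on exactly the same pillars as the paper: Theorem~\ref{mtplctcEq} applied to the test function $\chi_\scrD(\xi)\,F(\Theta_f(z,\widetilde\phi))$ together with the identity \eqref{ThetaGauss1} and the explicit evaluation \eqref{GS} (which, via Lemma~\ref{fltlem}, ties the first coordinate to the block $\phi_{p,q}=\widetilde\phi$, e.g.\ $g_1(p,q)/\sqrt q=\sqrt2\,\e^{-\i\phi_{p,q}/2}$ in case (i)) for smooth weights, and the mean-square estimate of Section~\ref{riemann} for the passage to Riemann integrable $f$. Two points where you deviate are worth recording. First, in the smooth step you make explicit the cusp cutoff $\chi_R\circ\Theta_f$ and the tail bound $\nu_f(|z|>R)\ll R^{-4}$, needed because $F(\Theta_f(\cdot))$ is not compactly supported on $\scrM$; the paper passes over this (it only mentions approximating $\chi_\scrD$ by continuous functions), so your treatment fills in a detail rather than changing the argument, and your sandwiching of the block masses (minorants plus the elementary count $\#(\ZZ_q^\times\cap q\scrD)\sim|\scrD|\varphi(q)$) is the standard way to rule out escape of mass. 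Second, for Riemann integrable $f$ you replace the paper's route---tightness (Lemma~\ref{lemB}), extraction of a convergent subsequence, and a Cauchy-sequence/uniqueness argument against smooth $\widetilde f$---by a direct two-sided squeeze $f_-\le f\le f_+$ with Lipschitz test functions and Cauchy--Schwarz; both reductions hinge on the same Lemma~\ref{RIlem}, and yours is the more streamlined packaging while the paper's makes the limit measure appear via compactness. One small slip: the mean-square bound needs only $N/q\to0$ (so that the supports shifted by $kq'/N$ separate), not $N^{4/3}/q\to\infty$, which is used solely to kill the $O(q^{3/2}y)=O(q^{3/2}N^{-2})$ error in the equidistribution theorem; since both hypotheses are assumed, this does not affect the proof.
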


\section{Proof of Theorem 4 for smooth weights}\label{smooth}

We now give the proof of Theorem \ref{shortgauss} for that the weight function $f$ of $g_f(p,q,N)$ is of Schwartz class, which means $f$ is smooth and all its derivatives are rapidly decreasing.

\begin{proof}
{\bf Case i:} $q\equiv 0 \bmod 4$. This case can be formulated as follows: For any bounded continuous $F:\CC\to\RR$ 

\begin{equation}\frac{1}{\#(\ZZ^\times_q\cap q\scrD)}\sum_{\substack {p\in(\ZZ^\times_q\cap q\scrD) \\ g_1(p,q)=\sqrt{q} \sigma}}F\bigg(\frac{g_f(p,q,N)}{\sqrt{N}}\bigg)\rightarrow\frac{1}{32\pi^2}\int_{\scrM}F(\Theta_f(g))\,d \mu(g)
\end{equation}
where $\sigma \in \{\pm1 \pm \i\}$.
This then follows from
\begin{equation}\label{say}
\frac{1}{\varphi(q)}\sum_{\substack{p\in\ZZ_q^\times \\ \phi{p,q}=\widetilde\phi}}
\chi_\scrD\bigg(\frac{p}{q}\bigg) F\bigg( \frac{g_f(p,q,N)}{\sqrt N} \bigg)\rightarrow\frac{|\scrD|}{32\pi^2}\int_{\scrM}F(\Theta_f(g))\,d\mu(g) .
\end{equation}

By \eqref{ThetaGauss1}, this corresponds to 
\begin{equation}
\frac{1}{\varphi(q)}\sum_{\substack{p\in\ZZ_q^\times \\ \phi_{p,q}=\widetilde\phi}}
\chi_\scrD\bigg(\frac{p}{q}\bigg) F\bigg(\Theta\bigg(\frac{p}{q}+\i \frac{1}{N^2}, 0\bigg)\bigg) \rightarrow\frac{|\scrD|}{32\pi^2}\int_{\scrM}F(\Theta(g))\,d \mu(g) .
\end{equation}
We now choose the test function
\begin{equation}
f(\xi,z,\widetilde\phi) = \chi_\scrD(\xi)\,F(\Theta_f(z,\widetilde\phi)),
\end{equation}
with $g=(z,\widetilde\phi)$. The proof then follows by Theorem \ref{mtplctcEq} (i), and a standard approximation argument in which we approximate $\chi_{\scrD}$ by continuous functions.

{\bf Case ii:} $q\equiv 1 \bmod 2$. In this case the statement reduces to 

\begin{equation}\frac{1}{\#(\ZZ^\times_q\cap q\scrD)}\sum_{\substack {p\in(\ZZ^\times_q\cap q\scrD) \\ g_1(p,q)=\pm\sqrt{q}\epsilon_q}}\;F\bigg(\frac{g_f(p,q,N)}{\sqrt{N}}\bigg)\rightarrow\frac{1}{16\pi^2}\int_{\scrM}F(\Theta_f(g))d\;\mu(g),
\end{equation}
that is
\begin{equation}\label{eq2}
\frac{1}{\varphi(q)}\sum_{\substack{p\in\ZZ_q^\times \\ \phi{p,q}=\widetilde\phi}}
\chi_\scrD\bigg(\frac{p}{q}\bigg)\,F\bigg( \frac{g_f(p,q,N)}{\sqrt N} \bigg)\rightarrow\frac{|\scrD|}{16\pi^2}\int_{\scrM}F(\Theta_f(g))\,d\mu(g) .
\end{equation}
The rest is analogous to case (i) except that we use Theorem \ref{mtplctcEq} (ii).

{\bf Case iii:} $q\equiv 2 \bmod 4$. We now deduce the statement of theorem to 

\begin{equation}\frac{1}{\#(\ZZ^\times_q\cap q\scrD)}\sum_{\substack {p\in(\ZZ^\times_q\cap q\scrD) \\ g_1(2p,q/2)=\pm\sqrt{q/2}\epsilon_{q/2}}}F\bigg(\frac{g_f(p,q,N)}{\sqrt{N}}\bigg)\rightarrow\frac{1}{16\pi^2}\int_{\scrM}F(\Theta_f(g))\,d \mu(g).
\end{equation}

In view of substituting $q=2 q_0$ and $p=2 p_0+ q_0$, we instead of \eqref{eq2} have 
\begin{equation}
\frac{1}{\varphi(q)}\sum_{\substack{p\in\ZZ_{q_0}^\times \\ \phi{p_0,q_0}=\widetilde\phi}}
\chi_\scrD\bigg(\frac{p_0}{q_0}+\frac12\bigg)\,F\bigg( \frac{g_f(2 p_0+q_0,2 q_0,N)}{\sqrt N} \bigg)\rightarrow\frac{|\scrD|}{16\pi^2}\int_{\scrM}F(\Theta_f(g))\,d\mu(g) .
\end{equation}

This then reads

\begin{equation}
\frac{1}{\varphi(q)}\sum_{\substack{p\in\ZZ_{q_0}^\times \\ \theta{p_0,q_0}=\widetilde\theta}}
\chi_\scrD\bigg(\frac{p_0}{q_0}+\frac12\bigg)\, F\bigg( \Theta_f\bigg(\bigg(\frac{p_0}{q_0}+\frac12\bigg)+\i\frac{1}{N^2}, 0 \bigg)\bigg)\rightarrow\frac{|\scrD|}{16\pi^2}\int_{\scrM}F(\Theta_f(g))\,d\mu(g) .
\end{equation}

We employ Theorem \ref{mtplctcEq} (iii), it hence follows with the same strategy as in case (ii).
\end{proof}

\section{Proof of Theorem 4 for Riemann integrable weights}\label{riemann}
The estimate of the mean square
\begin{equation}
M_{f}(q,N)=\frac{1}{\varphi(q)\,|\scrD|} \sum_{p\in\ZZ_q^\times\cap q\scrD} |g_f(p,q,N)|^2
\end{equation}
is the key ingredient for the proof of Theorem \ref{shortgauss} for Riemann integrable functions $f$.

\begin{lem}\label{RIlem}
Fix $a\in\NN$. Then there exists a constant $C_a>0$ such that as $q\to \infty$ along any subsequence with $q=aq'$, $q'$ prime,
$\frac{N}{q}\to 0$, and for every compactly supported Riemann integrable function $f:\RR\to\RR$, we have
\begin{equation}\label{varia}
\limsup_{q\to\infty} \frac{M_{f}(q,N)}{N} \leq \frac{C_a}{|\scrD|}\, \| f \|_2^2.
\end{equation}
\end{lem}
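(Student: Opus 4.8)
The plan is to expand $|g_f(p,q,N)|^2$ by writing $g_f(p,q,N)=\sum_{h}f(h/N)e_q(ph^2)$ and its conjugate, so that
\begin{equation*}
M_f(q,N)=\frac{1}{\varphi(q)|\scrD|}\sum_{p\in\ZZ_q^\times\cap q\scrD}\sum_{h,k\in\ZZ}f\bigg(\frac hN\bigg)f\bigg(\frac kN\bigg)\,e_q\big(p(h^2-k^2)\big).
\end{equation*}
First I would dispose of the condition $p\in q\scrD$ by positivity: since $f$ is real and we may assume $f\geq 0$ after replacing $f$ by $|f|$ (which only enlarges the mean square and does not affect the $\|f\|_2^2$ bound up to a constant), dropping the restriction $p\in q\scrD$ only increases the sum, so it suffices to bound $\frac{1}{\varphi(q)}\sum_{p\in\ZZ_q^\times}|g_f(p,q,N)|^2$ and then divide by $|\scrD|$. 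Next I would swap the order of summation to bring the $p$-sum inside, producing a Ramanujan-type sum $c_q(h^2-k^2)=\sum_{p\in\ZZ_q^\times}e_q\big(p(h^2-k^2)\big)$, which satisfies $|c_q(n)|\leq \gcd(n,q)$ and, more usefully, is supported (in absolute value $\geq$ something) on $n\equiv 0$ through its divisors; the key elementary fact is $\sum_{d\mid q}|c_q$-contribution$|$ is controlled by $\sum_{d\mid q}d\cdot\#\{(h,k):d\mid h^2-k^2\}$.

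The main term comes from the diagonal-type contribution $h^2\equiv k^2\bmod q$; the dominant piece $h=\pm k$ gives $\asymp\sum_h f(h/N)^2\asymp N\|f\|_2^2$ (a Riemann-sum approximation valid for Riemann integrable compactly supported $f$), and since $c_q(0)=\varphi(q)$ this yields exactly the right-hand side of \eqref{varia} with the constant absorbing the factor from $\varphi(q)$ versus $q$. The off-diagonal terms, where $h^2\equiv k^2\bmod q$ but $h\neq\pm k$, force $q\mid (h-k)(h+k)$ with the factorization split nontrivially between the two factors; here I would use that $q=aq'$ with $q'$ prime, so $q'\mid h-k$ or $q'\mid h+k$ (and the remaining factor $a$ contributes only $O(1)$ divisor combinations). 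Since $|h|,|k|\ll N$ and $q'\asymp q\gg N$ (because $N/q\to 0$), the congruence $q'\mid h\mp k$ together with $|h\mp k|\leq 2N< q'$ for $q$ large forces $h=\pm k$ after all, so the genuine off-diagonal contribution is empty for $q$ large. The terms with $h^2\not\equiv k^2\bmod q$ are handled by the bound $|c_q(h^2-k^2)|\leq\gcd(h^2-k^2,q)$, and summing $\gcd(h^2-k^2,q)$ over $|h|,|k|\ll N$ via $\sum_{d\mid q,\,d\leq\text{stuff}}d\cdot(\text{count of }h,k\text{ with }d\mid h^2-k^2)$; using $\tau(q)=O(1)$ and the fact that for each fixed divisor $d$ of $q'$ the number of pairs $(h,k)$ in a box of size $N$ with $d\mid h^2-k^2$ is $\ll N^2/d + N$, one sees this is $\ll N^2\tau(q)+N q^{o(1)}\ll N^2+N$, which divided by $\varphi(q)\asymp q$ is $\ll N^2/q + N/q = o(N)$ since $N/q\to 0$; in fact this is where one would want to be a touch more careful and it is the only place the precise hypothesis enters, though even the crude $N^2/q\to 0$ suffices because $N=o(q)$.

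The main obstacle I expect is making the off-diagonal count rigorous in the regime where $q$ has the small fixed factor $a$: one must verify that $q\mid h^2-k^2$ with $|h|,|k|\leq CN$ and $q\gg N$ really does reduce to $h\equiv\pm k$ modulo the $q'$-part, and then control the finitely many residues modulo $a$ — this is a clean but slightly fiddly CRT argument. A secondary subtlety is the reduction from Riemann integrable $f$ to the Riemann-sum estimate $\sum_h f(h/N)^2 = N\int f^2 + o(N)$: this holds for Riemann integrable compactly supported $f$ by the definition of the Riemann integral via upper and lower sums, but one should state it as such rather than invoke smoothness. Everything else — the Ramanujan sum bound, $\varphi(q)=(q'-1)\varphi(a)$, $\tau(q)=O_a(1)$ — is standard, and the $\limsup$ formulation gives exactly the room needed to absorb lower-order terms into the constant $C_a$.
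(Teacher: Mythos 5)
Your proof is correct in substance and shares the paper's skeleton: bound the mean square after discarding the restriction to $q\scrD$, reduce the average over $p$ to the congruence $h^2\equiv k^2\bmod q$, exploit $q=aq'$ with $q'$ prime together with $N=o(q)$ (so $|h\pm k|\ll N<q'$) to force $h=\pm k$, and finish with the Riemann-sum limit $\frac1N\sum_h f(h/N)^2\to\|f\|_2^2$, the cross terms $f(h/N)f(-h/N)$ being absorbed by Cauchy--Schwarz. The one genuine divergence is how the $p$-sum is handled. The paper bounds $\sum_{p\in\ZZ_q^\times\cap q\scrD}|g_f(p,q,N)|^2$ by the \emph{complete} sum $\sum_{m\in\ZZ_q}|g_f(m,q,N)|^2$, so exact orthogonality produces the condition $h^2\equiv k^2\bmod q$ with weight $q$ and nothing else; the price is only the factor $q/\varphi(q)=O_a(1)$, absorbed into $C_a$. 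You instead keep $p\in\ZZ_q^\times$, which turns the inner sum into the Ramanujan sum $c_q(h^2-k^2)$ and obliges you to estimate the terms with $q\nmid h^2-k^2$ via $|c_q(n)|\le\gcd(n,q)$ and a divisor count. That step does work: for large $q$ one has $\gcd(h^2-k^2,q)\le a$ on such terms, because $q'\mid h^2-k^2$ with $|h|,|k|\ll N$ would already force $h=\pm k$ and hence $q\mid h^2-k^2$; so their total contribution is $O_a(N^2\|f\|_\infty^2)$, which after dividing by $\varphi(q)N\asymp qN$ is $O(N/q)=o(1)$. Your route keeps the main term with the exact weight $\varphi(q)$, while the paper's completion trick avoids Kloosterman/Ramanujan-type input altogether; for a limsup bound with an unspecified $C_a$ the two are interchangeable, and the paper's is marginally shorter.

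One caveat: the opening reduction ``replace $f$ by $|f|$, which only enlarges the mean square'' is not literally justified — $|g_{|f|}(p,q,N)|$ need not dominate $|g_f(p,q,N)|$ pointwise, and monotonicity of the restricted mean square under $f\mapsto|f|$ is not clear. Fortunately it is also unnecessary: dropping the condition $p\in q\scrD$ uses only the nonnegativity of each term $|g_f(p,q,N)|^2$, and none of your later estimates needs $f\ge0$, since the $h=\pm k$ terms are bounded by $2\sum_h|f(h/N)|^2$ and the remaining terms are estimated in absolute value anyway. Deleting that sentence (and stating the Riemann-sum step for the Riemann integrable function $f^2$, as you note) leaves a complete proof.
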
 

\begin{proof}
We have
\begin{equation}\label{mean}
\begin{split}
\sum_{p\in\ZZ_q^\times\cap q \scrD} |g_f(p,q,N)|^2 & \leq \sum_{m\in\ZZ_q} |g_f(m,q,N)|^2 \\
& =\sum_{p\in\ZZ_q}\bigg|\sum_{h\in\ZZ}f\bigg(\frac{h}{N}\bigg)\,e_q(p h^2)\sum_{h'\in\ZZ}\overline{f\bigg(\frac{h'}{N}\bigg)\,e_q(p h'^2)}\bigg|\\
& \leq q \sum_{\substack{h,h'\in\ZZ\\ h^2\equiv {h'}^2\bmod q}} \bigg|f\bigg(\frac{h}{N}\bigg)\overline{ f\bigg(\frac{h'}{N}\bigg)} \bigg| .
\end{split}
\end{equation}

Since $q=a q'$ with $\gcd(a,q')=1$, by the Chinese remainder theorem the sum on the right hand side of equation \eqref{mean} is therefore
\begin{equation}
\sum_{\substack{h,h'\in \ZZ\\ h^2\equiv h'^2\bmod q'\\h^2\equiv h'^2\bmod a}}\bigg|f\bigg(\frac{h}{N}\bigg)\,\overline{ f\bigg(\frac{h'}{N}\bigg)} \bigg|.
\end{equation}
We then obtain 
\begin{equation}\begin{split}
M_{f}(q,N)
&\leq\frac{q}{|\scrD|\,\varphi(q)}\sum_{\substack{h,h'\in \ZZ\\ h^2\equiv h'^2\bmod q'}}\bigg|f\bigg(\frac{h}{N}\bigg) \,\overline{f\bigg(\frac{h'}{N}}\bigg) \bigg|\\
&\leq\frac{q}{|\scrD|\,\varphi(q)}\sum_{\substack{h,h'\in \ZZ \\ h\equiv h'\bmod q'}}\bigg(\bigg|f\bigg(\frac{h}{N}\bigg) \,\overline{f\bigg(\frac{h'}{N}\bigg)} \bigg|+\bigg|f\bigg(\frac{h}{N}\bigg)\,\overline{f\bigg(-\frac{h'}{N}\bigg)} \bigg|\bigg)\\
&=\frac{q}{|\scrD|\,\varphi(q)}\sum_{h}\sum_{k}\bigg(\bigg|f\bigg(\frac{h}{N}\bigg)\,\overline{f\bigg(\frac{h+kq'}{N}\bigg)} \bigg|+\bigg|f\bigg(\frac{h}{N}\bigg)\,\overline{f\bigg(-\frac{h+kq'}{N}\bigg)} \bigg|\bigg).\\
\end{split}\end{equation}

Note that for $k \neq 0$ and $N$ large enough, the supports of $f$ and $f$ shifted by $k\,q'/N$ do not overlap, and therefore the contributions from those terms disappear. Hence, only the terms with $k=0$ contribute so that
 \begin{equation}\begin{split}
 \lim_{q\to\infty}\sup \frac{M_{f}(q,N)}{N}
&\leq\lim_{N\to\infty}\sup\frac{\widetilde C_a}{|\scrD|\,N}\,\sum_{h}\bigg(\bigg|f\bigg(\frac{h}{N}\bigg) \bigg|^2+\bigg|f\bigg(\frac{h}{N}\bigg) \,\overline{f\bigg(-\frac{h}{N}\bigg)} \bigg|\bigg)\\
&\leq \frac{2\,\widetilde C_a}{|\scrD|}\,\left\|f\right\|_2^2,\\
\end{split}\end{equation}
by the Cauchy-Schwartz inequality.
\end{proof}
Lemma below states the relatively compactness by showing that the sequence probability measure given by short incomplete Gauss sums is tight. This furthermore implies that every sequence has a convergent subsequence. 
\begin{lem}\label{lemB}
 Let $f:\RR\to\RR$ be Riemann integrable with compact support. Then, for every $\epsilon>0$, $\delta>0$ there exists a smooth function $\widetilde f$ such that for the subsequence of $q$ specified in Lemma \ref{RIlem},
\begin{equation}
\limsup_{q\to\infty } \frac{1}{\varphi(q)} \big|\{ p\in\ZZ_q^\times : N^{-1/2} |g_f(p,q,N)-g_{\widetilde f}(p,q,N)|> \delta \}\big| < \epsilon .
\end{equation}
\end{lem}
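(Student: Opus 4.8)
The plan is to prove Lemma \ref{lemB} by a standard $L^2$-approximation argument, using the mean-square bound of Lemma \ref{RIlem} as the main tool. Since $f$ is Riemann integrable with compact support, for any $\eta>0$ we can choose a smooth compactly supported function $\widetilde f$ with $\|f-\widetilde f\|_2^2 < \eta$; the key point is that the \emph{difference} $g_f - g_{\widetilde f} = g_{f-\widetilde f}$ is itself an incomplete Gauss sum with weight $f - \widetilde f$, so Lemma \ref{RIlem} applies directly to it. Concretely, $N^{-1}M_{f-\widetilde f}(q,N) = \frac{1}{\varphi(q)\,|\scrD|\,N}\sum_{p\in\ZZ_q^\times\cap q\scrD}|g_{f-\widetilde f}(p,q,N)|^2$, and by Lemma \ref{RIlem} we have $\limsup_{q\to\infty} N^{-1}M_{f-\widetilde f}(q,N) \le \frac{C_a}{|\scrD|}\,\|f-\widetilde f\|_2^2$.

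The proof then proceeds in three short steps. First, fix $\epsilon>0$ and $\delta>0$ and set $\eta = \epsilon\,\delta^2\,|\scrD|/C_a$ (or any slightly smaller value to be safe), then pick a smooth compactly supported $\widetilde f$ with $\|f-\widetilde f\|_2^2<\eta$; here one invokes density of $C_0^\infty(\RR)$ in $L^2$ together with the fact that a Riemann integrable function of compact support lies in $L^2$. Second, apply Chebyshev's inequality in the form
\begin{equation}
\big|\{p\in\ZZ_q^\times\cap q\scrD : N^{-1/2}|g_{f-\widetilde f}(p,q,N)|>\delta\}\big| \le \frac{1}{\delta^2 N}\sum_{p\in\ZZ_q^\times\cap q\scrD}|g_{f-\widetilde f}(p,q,N)|^2 = \frac{|\scrD|\,\varphi(q)}{\delta^2\,N}\,\frac{M_{f-\widetilde f}(q,N)}{\varphi(q)\,|\scrD|}.
\end{equation}
Dividing by $\varphi(q)$ and taking $\limsup_{q\to\infty}$, the right-hand side is bounded by $\frac{1}{\delta^2}\limsup_{q\to\infty} N^{-1}M_{f-\widetilde f}(q,N) \le \frac{C_a}{\delta^2\,|\scrD|}\,\|f-\widetilde f\|_2^2 < \epsilon$ by our choice of $\eta$. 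Third, observe that $\{p : N^{-1/2}|g_f - g_{\widetilde f}|>\delta\}$ is exactly the set we just estimated (intersecting with $q\scrD$ only shrinks it, and one may either keep $\scrD$ throughout or note $\chi_\scrD\le 1$), which completes the argument.

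The only mild subtlety — and the place that needs a sentence of care rather than real difficulty — is matching the counting set in the statement (which is over all of $\ZZ_q^\times$, without the $q\scrD$ restriction) with the mean-square estimate (which is over $\ZZ_q^\times\cap q\scrD$). If the statement genuinely intends the unrestricted count over $\ZZ_q^\times$, one instead bounds $\sum_{p\in\ZZ_q^\times}|g_{f-\widetilde f}(p,q,N)|^2$ directly via the first line of \eqref{mean} in the proof of Lemma \ref{RIlem}, namely $\sum_{p\in\ZZ_q^\times}|g_{f-\widetilde f}(p,q,N)|^2 \le q\sum_{h^2\equiv h'^2\bmod q}|(f-\widetilde f)(h/N)\,\overline{(f-\widetilde f)(h'/N)}|$, and runs the same diagonal argument: for $N$ large the only surviving terms have $h\equiv h'\bmod q'$ with $|h-h'|$ too small to shift the support, forcing $h=h'$ (up to sign), giving $\limsup_{q\to\infty}\frac{1}{\varphi(q)N}\sum_{p\in\ZZ_q^\times}|g_{f-\widetilde f}(p,q,N)|^2 \le 2\widetilde C_a\|f-\widetilde f\|_2^2$, and then Chebyshev finishes as before with $\eta$ chosen proportional to $\epsilon\delta^2$. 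Either way there is no genuine obstacle; the whole lemma is the routine passage from a second-moment bound to a tightness-type statement, and its role is simply to reduce Theorem \ref{shortgauss} for Riemann integrable $f$ to the smooth case already handled in Section \ref{smooth}.
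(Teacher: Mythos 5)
Your proposal is correct and follows essentially the same route as the paper: approximate $f$ in $L^2$ by a smooth compactly supported $\widetilde f$, use the linearity $g_f-g_{\widetilde f}=g_{f-\widetilde f}$, and combine Chebyshev's inequality with the mean-square bound of Lemma \ref{RIlem} applied to $f-\widetilde f$, with $\|f-\widetilde f\|_2^2$ chosen of size $\epsilon\delta^2$. Your explicit handling of the mismatch between the counting set $\ZZ_q^\times$ and the $\scrD$-restricted mean square (via the first line of \eqref{mean}) is in fact slightly more careful than the paper's own write-up, which glosses over this point.
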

\begin{proof} 
By Chebyshev's inequality we have
\begin{equation}\label{chebyshev}
\limsup_{q\to\infty} \frac{1}{\varphi(q)} \big|\{ p\in\ZZ_q^\times : N^{-1/2} |g_f(p,q,N)|> \delta \}\big| < \frac{M_f(q,N)}{\delta^2\;N}.
\end{equation}

By Lemma \ref{RIlem}, there exists $K_\epsilon>0$ such that
\begin{equation}\label{chebyshev2}
\limsup_{q\to\infty } \frac{1}{\varphi(q)} \big|\{ p\in\ZZ_q^\times : N^{-1/2} |g_f(p,q,N)|> K_\epsilon \}\big| < \epsilon\; \|f\|_2^2.
\end{equation}

Since $g_f(p,q,N)-g_{\widetilde f}(p,q,N)=g_{f-\widetilde f}(p,q,N)$ and $f-\widetilde f$ is Riemann integrable, we get
\begin{equation}
\limsup_{q\to\infty} \frac{1}{\varphi(q)} \big|\{ p\in\ZZ_q^\times : N^{-1/2} |g_f(p,q,N)-g_{\widetilde f}(p,q,N)|> \delta \}\big| < 
\frac{ M_{f-\tilde f}(q,N)}{\delta^2 \;N}.
\end{equation}
The proof then follows by \eqref{chebyshev} and\eqref{chebyshev2}.
\end{proof}

\begin{proof}[The proof of Theorem \ref{shortgauss}]

We only go through the case $q\equiv0\bmod 4$, the other cases are analogous.

The tightness argument given by Lemma \ref{lemB} tells us that any sequence of $q\to\infty$ with $q=a q'$ contains a subsequence $\{q_j\}$ with the property: there is a probability measure $\nu$ (depending on the sequence chosen, $f$ and $\scrD$) on $\{\pm\frac{\pi}{2},\pm\frac{3 \pi}{2}\}\times\CC$ such that for any $\widetilde\phi\in\{\pm\frac{\pi}{2},\pm\frac{3\pi}{2}\}$ and any bounded continuous function $F:\CC\to\RR$ we have
\begin{equation}\label{lili}
\lim_{j\to\infty}\frac{1}{|\scrD|\,\varphi(q_j)}\sum_{\substack{p\in\ZZ_{q_j}^\times\cap q_j\scrD \\ \phi_{p,q_j}=\widetilde\phi}} F\bigg( \frac{g_f(p,q_j,N)}{\sqrt N} \bigg) = 
\int_{\CC}F(z)\, \nu_f(\widetilde\phi,dz) .
\end{equation}

We claim that for every $F\in\C_0^\infty(\CC)$
\begin{equation}\label{IF}
\lim_{q\to\infty}\frac{1}{|\scrD|\,\varphi(q)}\sum_{\substack{p\in\ZZ_{q}^\times\cap q\scrD \\ \phi_{p,q}=\widetilde\phi}} F\bigg( \frac{g_f(p,q,N)}{\sqrt N} \bigg)= \int_{\CC}F(z)\,\nu_f(\widetilde\phi,dz) 
\end{equation}
holds and it thus implies that $\nu$ is unique and the full sequence of $q$ converges. 

To prove the existence of limit \eqref{IF}, notice that since $F\in\C_0^\infty(\CC)$ we have $|F(w)-F(z)| \leq C \min\{1,|w-z|\}$ for some constant $C>0$. Therefore, for $\widetilde f$, $\delta$, $\epsilon$ as in Lemma \ref{lemB}, we have
\begin{equation}
\begin{split}\label{ineq000}
& \frac{1}{|\scrD|\,\varphi(q)}\sum_{\substack{p\in\ZZ_{q}^\times\cap q\scrD \\ \phi_{p,q}=\widetilde\phi}} \bigg| F\bigg( \frac{g_f(p,q,N)}{\sqrt N} \bigg)-
F\bigg( \frac{g_{\widetilde f}(p,q,N)}{\sqrt N} \bigg) \bigg| \\
& \leq \frac{C}{|\scrD|\,\varphi(q)}\sum_{\substack{p\in\ZZ_{q}^\times\cap q\scrD \\ \phi_{p,q}=\widetilde\phi }} \min\bigg\{ 1, \bigg| \frac{g_f(p,q,N)}{\sqrt N} -\frac{g_{\widetilde f}(p,q,N)}{\sqrt N} \bigg|\bigg\} \\
& \leq \frac{C}{|\scrD|\,\varphi(q)}\sum_{p\in\ZZ_{q}^\times} \min\bigg\{ 1, \bigg| \frac{g_f(p,q,N)}{\sqrt N} -\frac{g_{\widetilde f}(p,q,N)}{\sqrt N} \bigg|\bigg\} \\
& \leq \frac{C}{|\scrD|}\,(\delta +\epsilon ) .
\end{split}
\end{equation}
The sequence 
\begin{equation}\label{cauchy}
\frac{1}{|\scrD|\,\varphi(q)}\sum_{\substack{p\in\ZZ_{q}^\times\cap q\scrD \\ \phi_{p,q}=\widetilde\phi}} F\bigg( \frac{g_{\widetilde f}(p,q,N)}{\sqrt N} \bigg)
\end{equation}
defines a Cauchy sequence, as \eqref{IF} is satisfied for the smooth function $\widetilde f$ by Theorem \ref{shortgauss} for the smooth case. By the upper bound \eqref{ineq000}, the triangle inequality and the fact that \eqref{cauchy} is a Cauchy sequence, it is now observed that the sequence
\begin{equation}\label{say2}
\frac{1}{|\scrD|\,\varphi(q)}\sum_{\substack{p\in\ZZ_{q}^\times\cap q\scrD \\ \phi_{p,q}=\widetilde\phi}} F\bigg( \frac{g_f(p,q,N)}{\sqrt N} \bigg)
\end{equation}
is also a Cauchy sequence, therefore the claim is proved. We have thus shown that $\nu$ is unique and the full sequence of $q$ converges for every bounded continuous $F$.

Since $\widetilde f$ converges to $f$,
\begin{equation}\lim_{q\to\infty }\frac{1}{|\scrD|\,\varphi(q)}\sum_{\substack{p\in\ZZ_{q}^\times\cap q\scrD \\ \phi_{p,q}=\widetilde\phi}} F\bigg( \frac{g_{\widetilde f}(p,q,N)}{\sqrt N} \bigg) \to \lim_{q\to\infty}\frac{1}{|\scrD|\,\varphi(q)}\sum_{\substack{p\in\ZZ_{q}^\times\cap q\scrD \\ \phi_{p,q}=\widetilde\phi}} F\bigg( \frac{g_ f(p,q,N)}{\sqrt N} \bigg)
\end{equation}
holds by the bound \eqref{ineq000}. This concludes the proof of Theorem \ref{shortgauss} for Riemann integrable case.

\end{proof}

\section{Numerics}\label{numerics}
In Figures \ref{fig6007} and \ref{figboundry}, the value distribution of the real and imaginary parts of the short incomplete Gauss sum $\frac{g_f(p,q,N)}{\sqrt N}$, where $f$ is a characteristic function of the unit interval $[0,1]$, were given by histograms for $N=\left\lfloor \frac{1}{\sqrt{7}}\, q^{7/8}\right\rfloor$ and $N=\left\lfloor \frac{1}{\sqrt{7}}\, q^{3/4}\right\rfloor$, respectively. The curves show the probability density of the real and imaginary parts of $S_f(x,N)$ sampled at 400,000 random points $x$ in $[0,1]$ and truncated at $n=5000$.

In Figure \ref{figN=q}, we are dealing with the normalized short incomplete gauss sums for $N=\left\lfloor \frac{1}{\sqrt{7}}\,q\right\rfloor$. For this particular $N$, the value distribution of the short incomplete Gauss sums corresponds to the case of $N\asymp q$, see \cite{DemirciMarklof}.

The sum we were concerned with in \cite{DemirciMarklof} is

\begin{equation}
g_\varphi(p,q)=\sum_{h=0}^{q-1} \varphi\bigg(\frac{h}{q}\bigg)\, e_q(p h^2),
\end{equation}
where the weight function $\varphi$ is periodic with period one.
Then we have the following limiting distribution for the long incomplete Gauss sum for $q\equiv 1\bmod 2$
\begin{equation}
\bigg( \frac{g_1(p,q)}{\epsilon_q\sqrt q}, \frac{g_\varphi(p,q)}{g_1(p,q)} \bigg) \xrightarrow{d} (Y, G_\varphi)
\end{equation}
where 
$$G_\varphi(x)=\sum_{n\in\ZZ} \widehat \varphi_{n} \, e(n^2 x)$$
with Fourier coefficient $\widehat \varphi$ and $Y$ takes the values $\pm 1$ with equal probability.

From this point of view, we have the following relation

\begin{equation}\begin{split}
\frac{g_f(p,q,N)}{\sqrt{N}}&=\frac{1}{\sqrt{N}}\sum_{h=1}^{N}e_q(p h^2)\\
&=\frac{7^{1/4}}{\sqrt{q}}\sum_{h\in\ZZ} \varphi\bigg(\frac{h}{q}\bigg)\, e_q(p h^2)\\
&=7^{1/4}\,\frac{g_1(p,q)}{\epsilon_q\sqrt q}\,\frac{g_\varphi(p,q)}{g_1(p,q)}\,\epsilon_q.\\
\end{split}\end{equation}

For $q=6029\equiv 1\bmod4$ we have $\epsilon_q=1$. We then have the convergence $\frac{g_f(p,q,N)}{\sqrt N}\to 7^{1/4}\,Y \,G_\varphi $. Therefore the histograms in Figure \ref{figN=q} represent the value distribution of the real and imaginary parts the short incomplete Gauss sum $\frac{g_f(p,q,N)}{\sqrt N}$ for $N=\left\lfloor \frac{1}{\sqrt 7}\, q\right\rfloor$ and the curves indicate a numerical approximation to the real and imaginary parts of the probability density function of the random variable $7^{1/4}\,Y\, G_\varphi$.

\textsl{Acknowledgments.}I am grateful to my supervisor Jens Marklof for suggesting the problem and his support during the preparation of this paper. I would also like to thank Julia Brandes and Eugen Keil for their comment on the first draft of the paper.

\end{document}